\documentclass[11pt]{amsart}
\usepackage{amsmath}
\usepackage{amsfonts}

\usepackage{amscd}
\usepackage{amsthm}
\usepackage{amssymb} \usepackage{latexsym}
\usepackage{eufrak}
\usepackage{euscript}
\usepackage{epsfig}
\usepackage{graphics}
\usepackage{array}
\usepackage{enumerate}
\usepackage{dsfont}
\usepackage{color}
\usepackage{wasysym}

\usepackage{pdfsync}
\usepackage[colorlinks,citecolor=red,pagebackref,hypertexnames=false, breaklinks]{hyperref}

\parskip=4pt
\textwidth=1.2\textwidth
\textheight=1.08\textheight

\marginparwidth=65pt
\parskip=3pt

\calclayout
\allowdisplaybreaks

 \newcommand{\abs}[1]{{\left\lvert{#1}\right\rvert}}
\newcommand{\norm}[1]{{\left\lVert{#1}\right\rVert}}
\newcommand{\br}[1]{{\left({#1}\right)}}

\newcommand{\bel}[1]{\begin{equation}\label{#1}}

\newcommand{\be}{\begin{equation}}

\newcommand{\ba}{\begin{eqnarray}}
\newcommand{\ea}{\end{eqnarray}}

\newcommand{\qe}{\end{equation}}
\newcommand{\R}{{\mathbb R}}
\newcommand{\N}{{\mathbb N}}
\newcommand{\Z}{{\mathbb Z}}

\newcommand{\vol}{{\mathrm{vol}}}

\newcommand{\wt}{\widetilde}
\newcommand{\HH}{\mathcal{H}}
\newcommand{\es}{\mathrm{ess}}

\newcommand{\Hmm}[1]{\leavevmode{\marginpar{\tiny%
$\hbox to 0mm{\hspace*{-0.5mm}$\leftarrow$\hss}%
\vcenter{\vrule depth 0.1mm height 0.1mm width \the\marginparwidth}%
\hbox to
0mm{\hss$\rightarrow$\hspace*{-0.5mm}}$\\\relax\raggedright #1}}}

\theoremstyle{plain}
\newtheorem{thm}{Theorem}[section]
\newtheorem{lemma}[thm]{Lemma}

\newtheorem{prop}[thm]{Proposition}

\theoremstyle{definition}

\newtheorem{rem}[thm]{Remark}

\newtheorem{example}[thm]{Example}

\numberwithin{equation}{section}

\begin{document}

\date{\today}
\title{Riesz transforms for bounded Laplacians on graphs}

\author{Li Chen}
\address{Li Chen, Department of Mathematics, University of Connecticut,
341 Mansfield Road Storrs, CT  06269-1009, USA} 
\email{li.4.chen@uconn.edu}

\author{Thierry Coulhon}
\address{Thierry Coulhon,  PSL  Research University,  F-75006 Paris, France}
\email{thierry.coulhon@icloud.com}

\author{Bobo Hua}
\address{Bobo Hua, School of Mathematical Sciences, LMNS, Fudan University, Shanghai 200433, China}
\email{bobohua@fudan.edu.cn}
%\author{}
%\email{}
%\address{}
\keywords{}
\thanks{Li Chen has been supported in part by ICMAT Severo Ochoa project SEV-2011-0087 and she acknowledges that the research leading to these results has received funding from the European Research Council under the European Union's Seventh Framework Programme (FP7/2007-2013)/ ERC agreement no. 615112 HAPDEGMT. Bobo Hua is supported by NSFC, grant no. 11401106.}

\begin{abstract}
%We study some problems related to the Riesz transform for the so-called bounded Laplacians on graphs. \comment{
% introduce the main idea : proper notion of gradient} For  $1<p\leq 2,$ we obtain an interpolation inequality for all graphs. Moreover, we find a counterexample to the boundedness of Riesz transforms for bounded Laplacians. \comment{for which range of $p$'s? } We finally prove that a positive spectral gap implies the boundedness of Riesz transforms on $L^p$ for $1< p<\infty.$
%
%\comment{  mention Littlewood-Paley}

%\blue{We study the boundedness of Riesz transforms for bounded Laplacians on graphs. Following Dungey's arguments \cite{Dungey08}, we prove the $\ell^p,$ $p\in(1,2],$ boundedness for the gradients of continuous time heat semigroups and related interpolation inequalities for all graphs with bounded Laplacians which improve his results. These follow from introducing a proper notion of the gradients of functions on graphs.
%Using these interpolation inequalities, we construct a counterexample to the boundedness of Riesz transforms for $p\in(1,2)$ for bounded Laplacians. Moreover, we prove the boundedness of Riesz transforms for $1< p<\infty$ under the assumption of positive spectral gap. At the end, we show the $\ell^p$ boundedness of modified Littlewood-Paley-Stein operators for $p\in(1,2].$
%}

We study several problems related to the $\ell^p$ boundedness of  Riesz transforms  for graphs endowed with so-called bounded Laplacians. Introducing  a proper notion of gradient of functions on edges, we prove for $p\in(1,2]$  an $\ell^p$  estimate for the gradient of the continuous time heat semigroup, an $\ell^p$ interpolation inequality as well as the $\ell^p$ boundedness of the  modified Littlewood-Paley-Stein functions for all graphs with bounded Laplacians. This yields an analogue to Dungey's results in \cite{Dungey08} while removing some additional assumptions. Coming back to the classical notion of gradient, we give a counterexample to the interpolation inequality hence to the boundedness of Riesz transforms for bounded Laplacians for $1<p<2$. Finally, we prove the boundedness of the Riesz transform for $1< p<\infty$ under the assumption of positive spectral gap.

 \end{abstract}
\maketitle
\tableofcontents

%\blue{I use the red color for the original version and the blue one for the revised version. Please have a look and remove the colors if you agree (delete the red one), Bobo.}

\section{Introduction}
%Corollary 4.7 in GTM Engel Nagel One parameter semigroups for linear evolution equations.

On the Euclidean space $\R^n$, as is well-known, the Riesz transform $\nabla \Delta^{-1/2}$ is bounded on $L^p(\R^n)$ for any $1<p<\infty$, (here we adopt a convention that the Laplacian $\Delta$ is a non-negative operator on $L^2(\R^n)$).
On a general Riemannian manifold $M$ without boundary, whether the Riesz transform $\nabla \Delta_M^{-1/2}$ is bounded on $L^p(M,\vol),$ i.e.
\begin{equation}\label{eq:riesz manifold}
\||\nabla f|\|_{L^p(M,\vol)}\leq C\|\Delta_M^{1/2}f\|_{L^p(M,\vol)},\quad\forall f\in {\mathcal C}_0^{\infty}(M),
\end{equation}
encodes much geometric information of $M$. The boundedness of Riesz transforms on manifolds has been investigated thoroughly in the literature, see \cite{Strichartz83}, \cite{Bakry87} and the references in  \cite{AuscherCoulhonDuongHofmann04}. The results turn out to be quite different for  cases $1<p\leq 2$ and $p>2,$ see \cite{CoulhonDuong99,CoulhonDuong03,  AuscherCoulhonDuongHofmann04, AuscherCoulhon05,ChenCoulhonFeneuilRuss15}.  In the present paper, we concentrate on purely discrete phenomena.

The Riesz transform for  the so-called normalized Laplacians on graphs (see definition below) has  been studied by many authors, see for example \cite{Russ00,Russ01,Dungey04,Dungey08,BadrRuss09} etc.
In the present paper, we prove some results on Riesz transforms and Littlewood-Paley functions for a more general class of Laplacians on graphs, called bounded Laplacians. Without loss of generality, all manifolds and graphs we consider are connected.
%\comment{About \lq\lq bounded Laplacian \rq\rq: we always emphasize \lq\lq bounded\rq\rq ~throughout the paper. Do you need to explain somewhere why we only consider this case or what is the difference between the bounded and unbounded Laplacians (like physical Laplacian)?}

Let us now introduce the setting  and then state our main results.

%%%%%%%%%%%%%%%%%%%%%%%%%%%%%%%%%%%%%%%%%%%%%%%%%%%%%%%%%%%%%%%%%%

\subsection{Setting}

Let $(V,E)$ be an undirected combinatorial graph with the set of vertices $V$ and
the set of edges $E$, a symmetric subset of $V\times V$. Two vertices $x,y\in V$ are called neighbors  if they are connected by an edge $\{x,y\}\in E$, which is denoted by $x\sim y$.
We say that $(V,E)$ is locally finite if for each vertex there are only finitely many neighbors. The graph is called connected if for any distinct $x,y\in V$ there is a finite sequence of vertices $\{x_i\}_{i=0}^n,$ $n\in \N,$ such that $x=x_0\sim x_1\sim \cdots\sim x_n=y.$ In this paper, we only consider connected, locally finite graphs.

On $(V,E),$ we assign weights on vertices and edges as follows:
$$
\nu:V\to \R_{+},\quad V\ni x\mapsto \nu_x
$$
and
$$
\mu:E\to \R_{+},\quad E\ni\{x,y\}\mapsto \mu_{xy}=\mu_{yx}
$$
and call the quadruple $G=(V,E,\nu,\mu)$ a \emph{weighted graph}. A priori, there is no relation between the weights $\mu$ and $\nu$, which is similar to the setting of weighted manifolds (or manifolds with density), see \cite{Mor05,WW09,Gri09}. Later we will introduce a condition linking them via the boundedness of the Laplacians, see \eqref{e:bounded Laplacian}.

 Let $(V,\nu)$ and $(E,\mu)$ denote the discrete measure spaces on $V$ and $E$ equipped with the measures $\nu$ and $\mu$ respectively. We denote by $\R^V$ and $\R^E$ the set of real-valued functions on $V$ and $E,$ and by ${\mathcal C}_0(V)$ the set of finitely supported functions on $V.$

  The $\ell^p$ spaces, $\ell^p(V,\nu)$ and $\ell^p(E,\mu),$ for $p\in[1,\infty]$, are defined routinely, and the $\ell^p$ norms of functions are denoted by $\|\cdot\|_{\ell^p(V,\nu)}$ and $\|\cdot\|_{\ell^p(E,\mu)}$ respectively. More specifically, given a discrete measure space $(X,\pi),$ where $\pi$ is a measure defined on the $\sigma$-algebra of all subsets of $X,$ we define $\ell^p$ spaces based on $(X,\pi)$ as: for $p\in[1,\infty),$ the $\ell^p$ norm of a function $f\in \R^X$ is given by $$\|f\|_{\ell^p(X,\pi)}:=\left(\sum_{x\in X}|f(x)|^p\pi(x)\right)^{1/p}\in \R\cup\{\infty\},$$ and the space of $\ell^p$ summable functions
\begin{equation*}
\ell^p(X,\pi):=\left\{f\in\R^X|\ \|f\|_{\ell^p(X,\pi)}<\infty\right\}.
\end{equation*}
For $p=\infty,$ $\|f\|_{\ell^\infty(X,\pi)}=\sup_{x\in X}|f(x)|$ and  $\ell^\infty(X,\pi):=\left\{f\in\R^X|\ \|f\|_{\ell^\infty(X,\pi)}<\infty\right\}.$ This applies to $(X,\pi)=(V,\nu)$ and $(E,\mu)$ for a weighted graph $(V,E,\nu,\mu).$
For convenience, we symmetrically extend the edge weight function $\mu:E\to \R_{+}$ to $$\mu:V\times V\to \R_{\geq 0}, (x,y)\mapsto \mu_{xy}$$ by setting  $\mu_{xy}=0$ if $\{x,y\}\not\in E$.
In this way, we may write $\sum_{y}\mu_{xy}f(y)$ for $\sum_{y\sim x}\mu_{xy}f(y),$ for $x\in V.$

To a weighted graph $G$, one associates  a Dirichlet form
\begin{eqnarray*}
Q: {\mathcal C}_0(V)\to \R, \quad  f\mapsto Q(f):=\frac12\sum_{x,y}\mu_{xy}(f(x)-f(y))^2
\end{eqnarray*}
with respect to the Hilbert space $\ell^2(V,\nu),$ see Keller and Lenz \cite{KellerLenz12}. For locally finite graphs, the associated generator, called Laplacian, acts as
\begin{equation}\label{def:laplacian}
\Delta_{\nu,\mu} f(x)=\frac{1}{\nu_x}\sum_{y}\mu_{xy}(f(x)-f(y))
\end{equation}
for  $f\in {\mathcal C}_0(V).$ The Laplacian $\Delta_{\nu,\mu}$ is a nonnegative self-adjoint operator on $\ell^2(V,\nu)$ that generates  a semigroup $e^{-t\Delta_{\nu,\mu}}$ acting on $\ell^p(V,\nu)$ for $p\in [1,\infty].$
%Given the edge weight $\mu,$ the choice of measure $\nu$ will be crucial for the properties of the semigroup.
 In this paper, we always impose the following assumption on the weighted graph $G$
\begin{equation}\label{e:bounded Laplacian}\tag{$BLap$}
M:=\sup_{x\in V}\frac{\deg_x}{\nu_x}<\infty,
\end{equation}
where $\deg_x:=\sum_{y\in V}\mu_{xy}.$ This condition is equivalent to the fact that the Laplacian $\Delta_{\nu,\mu}$ is bounded on $\ell^2(V,\nu),$ see \cite[Lemma~1]{Davies93} or \cite{KellerLenz12}.
% or Remark \ref{r:bounded operator} below.
A specific choice of the measure $\nu,$ $\nu\equiv \deg$ on $V$, is of particular interest; the associated generator $\wt{\Delta}_\mu:=\Delta_{\deg,\mu}$ is then called the \emph{normalized Laplacian.} For simplicity, we omit the subscripts of $\Delta_{\nu,\mu}$ when the measures $\nu,\mu$ are clear from the context.

%{\Bl To move to Section 1.2:
%A weighted graph $(V,E,\deg,\mu)$ with the normalized Laplacian $\wt{\Delta}_\mu$ is said to satisfy the \emph{local doubling condition} if for some constant $c_0>1,$
%\begin{equation}\label{a:local doubling}\tag{$LD$}
%\sum_{y\sim x}\deg_y\leq c_0\deg_x,\quad\forall x\in V.
%\end{equation}}

In order to generalize  results on manifolds to the discrete setting, the first issue is to give an appropriate definition of $\||\nabla f|\|_{\ell^p}$ for $f\in \R^V$ on graphs analogous to $\||\nabla f|\|_{L^p(M,\vol)}$ on manifolds. The common definition adopted by many authors, e.g. \cite{Russ00,Dungey08}, is the following. Given $f\in \R^V,$ define the (length of the) gradient of $f\in \R^V$ as a function on $V:$
\begin{equation}\label{eq:grad}
|\nabla f|\in\R^V,\quad |\nabla f|(x)
:=\left(\frac{1}{2\nu_x}\sum_{y}|f(x)-f(y)|^2\mu_{xy}\right)^{1/2},\quad \forall x\in V.
\end{equation}
This definition coincides with the definition induced by the ``carr\'e du champ" on graphs, see \cite{Lin10,BakryGentilLedoux14,BHLLMY15}. Then the $\ell^p$ norm of the gradient of a function $f$ is defined as
$\||\nabla f|\|_{\ell^p(V,\nu)}$. By mimicking the Riemannian setting, one says that the Riesz transform is bounded on $\ell^p$ for a weighted graph $G=(V,E,\nu,\mu)$ if:
\begin{equation}\label{eq:wrong riesz}\tag{$R_{p,G}$}
\||\nabla f|\|_{
\ell^p(V,\nu)}\leq C\|\Delta^{1/2}f\|_{\ell^p(V,\nu)}, \ \ \ \forall f\in {\mathcal C}_0(V).
\end{equation}
%To emphasize the setting of Riemannian manifolds, we put a superscript on ddd, while for graphs, we usually omit the superscripts.
Here is an alternative definition of the $\ell^p$ norm of the gradient of functions. Given a function $f\in \R^V,$ define the difference of $f$ as
$$
D f:V\times V\to\R,\quad (x,y)\mapsto D_{xy} f:=f(y)-f(x).
$$
This gives a well-defined function on edges,  $$|D f|:E\to \R, \quad E\ni\{x,y\}\mapsto |D_{xy}f|,$$ which mimics the (absolute values of the) directional derivatives of a function in various directions on manifolds. Hence the $\ell^p$ norm of the gradient of a function on graphs can also be defined as
\begin{equation}\label{Gradient-edge}
\||D f|\|_{\ell^p(E,\mu)}^p=\sum_{\{x,y\}\in E}\mu_{xy}|f(x)-f(y)|^p.
\end{equation}
Observe that for $p=2$ there holds $\||D f|\|_{\ell^2(E,\mu)}^2=Q(f)$ for any $f\in {\mathcal C}_0(V)$. However, for $p\neq 2$, the significant difference between  $\||D f|\|_{\ell^p(E,\mu)}$ and $\||\nabla f|\|_{\ell^p(V,\nu)}$ is that the former does not depend on the weight $\nu,$ while the latter does. % $\||D f|\|_{\ell^p(E,\mu)}$ does not involve the weight $\nu$
We will explore the relation between the two norms in  detail  in Lemmas \ref{l:bounded operator} and \ref{lem:equiv} below. Moreover, we shall see that they are not equivalent even if a local doubling condition is assumed, see Example \ref{ex:non equiv}.

\medskip
\noindent{\bf Notation:} For convenience, the constants $C$ in the context may change from line to line.
Two functionals
$F,H:X\to \R_{\geq0}$ on a space of functions $X$  are called equivalent, denoted by $$F(f)\approx H(f),\quad \forall f\in X,$$ if there are uniform constants $c$ and $C$ such that $$cH(f)\leq F(f)\leq CH(f),\quad \forall f\in X.$$
%%%%%%%%%%%%%%%%%%%%%%%%%%%%%%%%%%%%%%%%%%%%%%%%%%%%%%%%%%%%%%%

\subsection{Main results}
Apart from preliminary Section \ref{s:setting}, the present paper decomposes into three parts, in which we study the discrete counterparts of known results for the Riesz transform and related topics on Riemannian manifolds.
Section~\ref{s:Modified Stein} is devoted to the proof of  an interpolation or multiplicative inequality by a modification of an argument by Stein, see the proof of Theorem~\ref{thm:main thm1}.
Section \ref{LPS} is about estimates for Littlewood-Paley-Stein functions on graphs.
Comparing with previous results, we adopt the \lq\lq gradient\rq\rq ~$|Df|$ instead of $|\nabla f|$. In fact, we justify that $|Df|$ is a more natural definition in the general setting by providing two counterexamples (see Examples \ref{ex:non equiv}, \ref{ex:non MIp}). In Section \ref{s:Riesz transform}, we study the $L^p$ boundedness of Riesz transforms on graphs with positive spectral gap of the Laplacian.

\medskip

%A general result on the boundedness of quasi Riesz transforms holds for any complete Riemannian manifold \cite[Theorem~1.2]{Chen15}:
% Let $M$ be a complete Riemannian manifold, then for any $\alpha\in (0,1/2),$ the quasi Riesz transform, i.e. the operator $\nabla e^{-\Delta}\Delta_M^{-\alpha},$ is bounded on $L^p(M,\vol)$ for all $p\in(1,2].$
%
Let us  first recall the analogue on a Riemannian manifold $M$ of some properties that we are going to study on graphs.
The first one is an $L^p$ interpolation or multiplicative inequality:
\begin{equation}\label{prop:MIp}
\||\nabla f|\|_{L^p(M,\vol)}^2\leq C_p\|f\|_{L^p(M,\vol)}\|\Delta_M f\|_{L^p(M,\vol)},\quad \forall f\in {\mathcal C}_0^{\infty}(M).%\tag{MIp}
\end{equation}
The second one is an $L^p$ estimate of the gradient of the heat semigroup:
\begin{equation}\label{prop:GpM}
\||\nabla e^{-t\Delta_M}f|\|_{L^p(M,\vol)}^2
\leq C_pt^{-1/2}\|f\|_{L^p(M,\vol)},\quad \forall f\in {\mathcal C}_0^{\infty}(M), t>0.
\end{equation}
It is known that \eqref{prop:MIp} is equivalent to \eqref{prop:GpM} for any $p\in (1,\infty)$, see
 \cite{CoulhonSikora10}. Both of them are weaker than the boundedness of Riesz transform, i.e. \eqref{eq:riesz manifold} implies \eqref{prop:MIp} and \eqref{prop:GpM}. It has been proved in the Riemannian case that \eqref{prop:MIp} and \eqref{prop:GpM} always hold for $p\in(1,2]$ (see \cite{Chen15}). However, it is unknown for \eqref{eq:riesz manifold} in the general case, i.e. it is still an open problem whether there is a complete Riemannian manifold on which the Riesz transform is unbounded for some $p\in (1,2)$ (see \cite[Conjecture 1.1]{CoulhonDuong03}).  For positive results on \eqref{eq:riesz manifold} in this range, we refer to \cite{CoulhonDuong99,ChenCoulhonFeneuilRuss15} where  some pointwise upper bound of the heat kernel together with a global volume doubling property is  needed.

Using  definition \eqref{eq:grad}, Dungey \cite{Dungey08} could extend these results on manifolds to graphs under some additional assumption.
%A weighted graph $(V,E,\deg,\mu)$ with the normalized Laplacian $\wt{\Delta}_\mu$ is called satisfying the \emph{local doubling condition} if for some constant $c_0>1,$
%\begin{equation}\label{a:local doubling}\tag{$LD$}
%\sum_{y\sim x}\deg_y\leq c_0\deg_x,\quad\forall x\in V.
%\end{equation}
A weighted graph $(V,E,\deg,\mu)$ with the normalized Laplacian $\wt{\Delta}_\mu$ is said to satisfy the \emph{local doubling condition} if for some constant $c_0>1,$
\begin{equation}\label{a:local doubling}\tag{$LD$}
\sum_{y\sim x}\deg_y\leq c_0\deg_x,\quad\forall x\in V.
\end{equation}

This condition is equivalent to the conjunction of the following two properties, see \cite[p.571]{CoulhonGrigoryanZucca05}:
\begin{enumerate}[(a)]
\item There exists $c_1\geq 1$ such that $\deg_y\leq c_1\deg_x$ for all $x\sim y\in V.$
\item\label{a:uniformly local finite} The graph is uniformly locally finite, i.e. $\sup_{x\in V}\sharp\{y\in V:y\sim x\}<\infty.$
\end{enumerate}
Property $(b)$ means that the combinatorial degree is uniformly bounded from above, which is a restrictive assumption for graphs. Under the assumption  \eqref{a:local doubling}, Dungey proved that for any $p\in (1,2],$ $f\in {\mathcal C}_0(V), t>0,$
\begin{equation}\label{eq:Dungey MIp}
\||\nabla f|\|^2_{\ell^p(V,\deg)}\leq C_p \|f\|_{\ell^p(V,\deg)}\|\wt{\Delta}_\mu f\|_{\ell^p(V,\deg)},
\end{equation}
\begin{equation}\label{eq:Dungey Gp}
\||\nabla e^{-t\wt{\Delta}_\mu}f|\|_{\ell^p(V,\deg)}\leq C'_p t^{-1/2}\|f\|_{\ell^p(V,\deg)},
\end{equation}
where $C_p,C'_p$ are constants depending only on $p,$ see \cite[Theorem~1.1~and~Corollary~1.2]{Dungey08}.

%\red{Our first observation is that in order to obtain Dungey's results \eqref{eq:Dungey Gp} and \eqref{eq:Dungey MIp} on graphs, the assumption of locally doubling condition \eqref{a:local doubling} is somehow necessary. In fact, without this constraint we may construct a counterexample to \eqref{eq:Dungey MIp},
%\comment{for which $p$?} see Example \ref{ex:non MIp}. By the results in Lemma~\ref{GpMIp}, this provides a counterexample to the boundedness of Riesz transforms for $p\in (1,2)$
%\comment{it seems to work for all $p\in (1,\infty)$. \blue{There is no contradiction for $p\geq 2$.}} on graphs (see Remark \ref{counter} below), while we don't know any results in this direction on manifolds as mentioned before.}

Our first observation is that in order to obtain Dungey's results \eqref{eq:Dungey Gp} and \eqref{eq:Dungey MIp} on graphs, the assumption of local doubling condition \eqref{a:local doubling} is somehow necessary. In fact, without this constraint we may construct a counterexample to \eqref{eq:Dungey MIp} for $p\in(1,2),$ see Example \ref{ex:non MIp}. This also provides a counterexample to the boundedness of Riesz transforms for $p\in (1,2)$ on graphs (see Remark \ref{counter} below). However, we don't know any results in this direction on manifolds as mentioned before.

Recall  that \eqref{prop:MIp} and \eqref{prop:GpM} hold for $1<p\le 2$ on any complete manifold without any local assumption (see \cite[Proposition 2.7]{Chen15}). Hence in the setting of graphs,  \eqref{a:local doubling} is not a natural assumption and in fact quite restrictive. In order to circumvent this restriction, we will use  the definition $\||D f|\|_{\ell^p(E,\mu)}$ in place of $\||\nabla f|\|_{\ell^p(V,\deg)}$. Now we state the following theorem, for which the proof will be given in Section~\ref{s:Modified Stein}.

\begin{thm}\label{thm:main thm1}
Let $G=(V,E,\nu,\mu)$ be a weighted graph satisfying \eqref{e:bounded Laplacian} and $p\in (1,2].$ Then for any $f\in {\mathcal C}_0(V), t>0,$
\begin{equation}\label{eq:mainthm 2}\tag{$MI_{p,G}$}
\||D f|\|^2_{\ell^p(E,\mu)}\leq C_p \|f\|_{\ell^p(V,\nu)}\|\Delta f\|_{\ell^p(V,\nu)},
\end{equation}
 \begin{equation}\label{eq:mainthm 1}\tag{$G_{p,G}$}
 \||D e^{-t\Delta}f|\|_{\ell^p(E,\mu)}\leq C_p t^{-1/2}\|f\|_{\ell^p(V,\nu)}.
 \end{equation}
 \end{thm}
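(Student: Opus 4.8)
The plan is to establish the multiplicative inequality $(MI_{p,G})$ first, by adapting Stein's classical argument directly to the edge gradient $|Df|$, and then to deduce the semigroup gradient bound $(G_{p,G})$ from it using analyticity of the heat semigroup. The endpoint $p=2$ is immediate, since
\[
\||Df|\|_{\ell^2(E,\mu)}^2=Q(f)=\langle f,\Delta f\rangle_{\ell^2(V,\nu)}\le\|f\|_{\ell^2(V,\nu)}\|\Delta f\|_{\ell^2(V,\nu)}
\]
by summation by parts and Cauchy--Schwarz, so the real content lies in the range $p\in(1,2)$.

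The engine of the argument is the elementary pointwise inequality: for $p\in(1,2]$ there is a constant $c_p>0$ such that for all $a,b\in\R$,
\[
(|a|^{p-2}a-|b|^{p-2}b)(a-b)\ge c_p\,|a-b|^2(|a|+|b|)^{p-2},
\]
which, raising to the power $p/2$ and rearranging, becomes the upper bound
\[
|a-b|^p\le c_p^{-p/2}\big[(|a|^{p-2}a-|b|^{p-2}b)(a-b)\big]^{p/2}(|a|+|b|)^{(2-p)p/2}.
\]
Summing this over edges against $\mu_{xy}$ with $a=f(x)$, $b=f(y)$, and applying H\"older's inequality with the conjugate exponents $2/p$ and $2/(2-p)$, I would bound $\||Df|\|_{\ell^p(E,\mu)}^p$ (up to the constant $c_p^{-p/2}$) by the product of
\[
\Big(\sum_{x,y}\mu_{xy}(|f(x)|^{p-2}f(x)-|f(y)|^{p-2}f(y))(f(x)-f(y))\Big)^{p/2}
\]
and $\big(\sum_{x,y}\mu_{xy}(|f(x)|+|f(y)|)^p\big)^{(2-p)/2}$.

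The first factor equals $\big(2\langle|f|^{p-2}f,\Delta f\rangle_{\ell^2(V,\nu)}\big)^{p/2}$ by the discrete Green formula, and $\langle|f|^{p-2}f,\Delta f\rangle\le\||f|^{p-1}\|_{\ell^{p'}(V,\nu)}\|\Delta f\|_{\ell^p(V,\nu)}=\|f\|_{\ell^p(V,\nu)}^{p-1}\|\Delta f\|_{\ell^p(V,\nu)}$ by H\"older. The second factor is where the bounded Laplacian hypothesis enters and is the crux of the whole argument: using $(|a|+|b|)^p\le 2^{p-1}(|a|^p+|b|^p)$ and the symmetry of $\mu$ one gets $\sum_{x,y}\mu_{xy}(|f(x)|+|f(y)|)^p\le 2^p\sum_x|f(x)|^p\deg_x$, and then the assumption $\deg_x\le M\nu_x$ from \eqref{e:bounded Laplacian} turns this into $2^pM\|f\|_{\ell^p(V,\nu)}^p$. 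This is exactly the step that would fail for the classical vertex gradient $|\nabla f|$ with $\nu=\deg$ unless local doubling were imposed, and it is the structural reason why $|Df|$ is the right object here. Collecting the powers of $\|f\|_{\ell^p(V,\nu)}$ (they add up to $p/2$) yields $\||Df|\|_{\ell^p(E,\mu)}^p\le C_p\|f\|_{\ell^p(V,\nu)}^{p/2}\|\Delta f\|_{\ell^p(V,\nu)}^{p/2}$, which is $(MI_{p,G})$ after taking the $2/p$-th power.

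Finally, to obtain $(G_{p,G})$ I would apply $(MI_{p,G})$ to $u=e^{-t\Delta}f$ (the computation is valid for any $u\in\ell^p(V,\nu)$, not merely finitely supported, because all the sums converge absolutely once $\Delta$ is bounded, so that $u\in\ell^p(V,\nu)$ forces $\Delta u\in\ell^p(V,\nu)$; a density argument from $\mathcal{C}_0(V)$ also works). This gives $\||Du|\|_{\ell^p(E,\mu)}^2\le C_p\|u\|_{\ell^p(V,\nu)}\|\Delta u\|_{\ell^p(V,\nu)}$, and combining the contractivity estimate $\|u\|_{\ell^p(V,\nu)}\le\|f\|_{\ell^p(V,\nu)}$ with the analyticity estimate $\|\Delta e^{-t\Delta}f\|_{\ell^p(V,\nu)}\le C_p\,t^{-1}\|f\|_{\ell^p(V,\nu)}$ for the symmetric sub-Markovian semigroup yields $\||De^{-t\Delta}f|\|_{\ell^p(E,\mu)}\le C_p\,t^{-1/2}\|f\|_{\ell^p(V,\nu)}$. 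The two steps I expect to require the most care are pinning down the pointwise inequality with an explicit $c_p$ valid on all of $\R$ (the negative exponent $p-2$ calls for a short separate treatment near $a=b$ and when one of $a,b$ vanishes) and the clean use of \eqref{e:bounded Laplacian} to absorb the extra factor $(|f(x)|+|f(y)|)^p$; the analyticity bound for $\|\Delta e^{-t\Delta}\|_{\ell^p\to\ell^p}$ is classical for symmetric Markov semigroups and I would simply invoke it.
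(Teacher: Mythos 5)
Your proposal is correct, but it reaches $(MI_{p,G})$ by a genuinely different route than the paper. The paper runs a modified Stein semigroup argument through Dungey's pseudo-gradient $\Gamma_p(f)=pf\Delta f-f^{2-p}\Delta(f^p)$: it first proves $\||Df|\|_{\ell^p(E,\mu)}\le C\norm{\sqrt{\Gamma_p(f)}}_{\ell^p(V,\nu)}$ for $f\ge 0$ (Lemma \ref{lem:key Gamma}, resting on the symmetrization inequality of Lemma \ref{lem:sym}), then estimates $\norm{\sqrt{\Gamma_p(u)}}_{\ell^p(V,\nu)}$ for $u=e^{-t\Delta}f$ using $\Gamma_p(u)=-u^{2-p}(\partial_t+\Delta)(u^p)$, H\"older's inequality and the mass conservation of Lemma \ref{lem:laplace zero}, and finally lets $t\to 0^+$ via Fatou's lemma; this forces a preliminary reduction to nonnegative $f$. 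You replace all of this by a discrete Stroock--Varopoulos-type argument: the pointwise inequality $(|a|^{p-2}a-|b|^{p-2}b)(a-b)\ge c_p|a-b|^2(|a|+|b|)^{p-2}$ for $1<p\le 2$ (which indeed holds: both sides are $p$-homogeneous and the relevant ratio extends continuously and positively across $a=b$, so compactness of $\{|a|+|b|=1\}$ gives $c_p>0$), then edge-wise H\"older with exponents $2/p$ and $2/(2-p)$, Green's formula, and \eqref{e:bounded Laplacian}. Your route buys two things: no semigroup theory enters the proof of $(MI_{p,G})$ at all (no maximal function, no Fatou limit, no Lemma \ref{lem:laplace zero}), and signed $f$ is handled directly, avoiding the splitting $f=f_+-f_-$, which in the paper is actually delicate since $\|\Delta f_{\pm}\|_{\ell^p(V,\nu)}$ is not controlled by $\|\Delta f\|_{\ell^p(V,\nu)}$. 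What the paper's heavier machinery buys is reusability: the $\Gamma_p$/symmetrization apparatus is exactly what is needed again for the Littlewood--Paley--Stein estimates of Theorem \ref{thm:main thm2}, which your shortcut does not give. Finally, your passage from $(MI_{p,G})$ to $(G_{p,G})$ --- apply it to $e^{-t\Delta}f$ and combine contractivity with the analyticity bound $\|\Delta e^{-t\Delta}\|_{p\to p}\le C_p t^{-1}$ --- is identical to the forward direction of the paper's Lemma \ref{GpMIp}, and your density remark (both sides of $(MI_{p,G})$ are continuous in $\ell^p(V,\nu)$ because $D$ and $\Delta$ are bounded operators) correctly supplies a justification that the paper leaves implicit when it substitutes $e^{-t\Delta}f$ into an inequality stated for finitely supported functions.
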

In fact,  properties \eqref{eq:mainthm 1} and  \eqref{eq:mainthm 2} are equivalent for any weighted graph and any $p\in(1,\infty),$ see Lemma~\ref{GpMIp}.

%???However, for our purpose, we find it's more appropriate to adopt the notion $\|\nabla f\|_{\ell^p(E,\mu)}$ for the gradient $\nabla f$ defined on $E,$ see the proof of Theorem \ref{thm:one part}.

%In particular, his result reads as.
%Local doubling condition is quite restrictive.

\medskip

Next we study the $L^p$ boundedness of so-called Littlewood-Paley-Stein functions on graphs. In continuous settings, they have received much attention in harmonic analysis and also probability theory, see e.g. \cite{Stein70,Bakry87,CoulhonDuongLi03, BBL16} and references therein.
The ``vertical" Littlewood-Paley-Stein operator $\mathcal{H}_{\Delta_M}$ on a Riemannian manifold $M$ is defined for functions $f\in {\mathcal C}_0^{\infty}(M)$ by
$$
\mathcal{H}_{\Delta_M}f(x):=\left(\int_0^\infty |(\nabla e^{-t\Delta_M}f)(x)|^2 dt\right)^\frac{1}{2},\quad x\in M.
$$
Stein's argument \cite[Chapter~II]{Stein70} can be used to derive the Littlewood-Paley-Stein estimate on Riemannian manifolds, see \cite[Theorem~1.2]{CoulhonDuongLi03}: $\mathcal{H}_{\Delta_M}$ extends to a bounded (sublinear) operator in $L^p(M,\vol),$ for $p\in(1,2],$ i.e.
$$
\|\mathcal{H}_{\Delta_M}f\|_{L^p}\leq C_p\|f\|_{L^p},\quad \forall f\in {\mathcal C}_0^{\infty}(M).
$$

An analogous result for normalized Laplacians on graphs was obtained by Dungey \cite[Theorem~1.3]{Dungey08}. That is,
let $G$ be a weighted graph with normalized Laplacian $\wt{\Delta}_\mu.$ For any $0\leq f\in {\mathcal C}_0(V),$
%\comment{why non-negative?\blue{It is easy to state $f^p,$ instead of $|f|^p.$ Definitely, I think it would be a better idea not to give the impression that positivity plays a role here} BB: Yes, it is somehow uncomfortable if we ask the positivity of the function. The positivity of the functions is also needed in the paper of Dungey. If we assume any function $f,$ then we don't know how to define the $(\mathcal{H}_{p,\wt{\Delta}_\mu}f)$ properly. We have two choices: one is $$
%(\mathcal{H}_{p,\wt{\Delta}_\mu}f)(x):=\left(\int_0^\infty \Gamma_p(e^{-t\wt{\Delta}_\mu}|f|)(x)dt\right)^\frac{1}{2},\quad x\in V,
%$$ which does not say anything more than the positive functions; The other is $$
%(\mathcal{H}_{p,\wt{\Delta}_\mu}f)(x):=\left(\int_0^\infty \Gamma_p(|e^{-t\wt{\Delta}_\mu}f|)(x)dt\right)^\frac{1}{2},\quad x\in V,
%$$ which will be unclear whether it is bounded for any function if it is true for positive functions, since $\Gamma_p$ is subtle. By that reason, I prefer to keep the writing for positive functions. What is your opinion?
%
%\red{What we really study is the $\ell^p$ boundedness of $ \mathcal H$ and $\mathcal H_{\alpha}$ below,  whose definitions are not restricted for $f\ge 0$.  In the proof of Theorem 1.2 (also theorem 1.1), we explain how to go from $f\ge 0$ to general $f$. }
%}
define
$$
(\mathcal{H}_{p,\wt{\Delta}_\mu}f)(x):=\left(\int_0^\infty \Gamma_p(e^{-t\wt{\Delta}_\mu}f)(x)dt\right)^\frac{1}{2},\quad x\in V,
$$
 where $\Gamma_p(u):=pu\wt{\Delta}_\mu u-u^{2-p}\wt{\Delta}_\mu (u^p)$ is a modified gradient.

Then for $p\in (1,2],$ there exists a constant $C_p$ such that
$$
\|\mathcal{H}_{p,\wt{\Delta}_\mu}f\|_{\ell^p(V,\deg)}\leq C_p\|f\|_{\ell^p(V,\deg)},\quad \forall 0\leq f\in {\mathcal C}_0(V).
$$

The definition of $\mathcal{H}_{p,\wt{\Delta}_\mu}$ involves the technical term $\Gamma_p(f)$ which differs for various $p\in (1,2].$ We introduce a modified Littlewood-Paley-Stein function on edges which does not involve $p$ by replacing $\Gamma_p$ with $D$. Define $\mathcal{H}f\in \R^E$ for any $ f\in {\mathcal C}_0(V)$ as
$$
(\mathcal{H}f)(\{x,y\}):=\left(\int_0^\infty  |D(e^{-t\Delta}f)|^2(\{x,y\})dt\right)^\frac{1}{2},\quad \{x,y\}\in E.
$$
In addition, given $a\in \R,$ define $\mathcal{H}_af\in \R^E$ for any $f\in {\mathcal C}_0(V)$ as
$$
(\mathcal{H}_af)(\{x,y\}):=\left(\int_0^\infty e^{at} |D(e^{-t\Delta}f)|^2(\{x,y\})dt\right)^\frac{1}{2},\quad \{x,y\}\in E.
$$
Here $a\in\R$ is a parameter that one uses to improve estimates for Littlewood-Paley-Stein functions on graphs possessing a positive spectral gap, following \cite{Lohoue87} and \cite[Theorem~1.3]{CoulhonDuongLi03}. We say a weighted graph $G=(V,E,\nu,\mu)$ has a positive spectral gap if the infimum of the spectrum of the Laplacian is positive, i.e. $\inf\sigma(\Delta)>0,$ where $\sigma(\Delta)$ denotes the $\ell^2(V,\nu)$ spectrum of the Laplacian $\Delta$. Using the same trick as in the proof of Theorem \ref{thm:main thm1}, we can show the following.
%\comment{rewrite the blue sentence}
%\comment{I think it's OK}
\begin{thm}\label{thm:main thm2}
Let $G=(V,E,\mu,\nu)$ be a weighted graph satisfying \eqref{e:bounded Laplacian} and let $p\in (1,2].$ Then there exists a constant $C$ such that for any $t>0,$
\begin{equation}\label{pleq}
\|\mathcal{H}f\|_{\ell^p(E,\mu)}\leq C\|f\|_{\ell^p(V,\nu)},\quad \forall f\in {\mathcal C}_0(V).
\end{equation}
Moreover, if $G$ has a positive spectral gap $\lambda_1=\inf\sigma(\Delta)>0$, then for any $a<2\lambda_1(p-1),$
\begin{equation}\label{eq:alpha pleq}
\|\mathcal{H}_a(f)\|_{\ell^p(E,\mu)}\leq C\|f\|_{\ell^p(V,\nu)},\quad \forall f\in {\mathcal C}_0(V).
\end{equation}
\end{thm}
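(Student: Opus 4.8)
The plan is to run Stein's argument exactly as in the proof of Theorem~\ref{thm:main thm1}, but now integrated in $t$ against $dt$ (resp.\ $e^{at}\,dt$) at the level of edges. First I would reduce to nonnegative data: writing $f=f^+-f^-$ and $u^\pm(t)=e^{-t\Delta}f^\pm\ge 0$, the triangle inequality in $L^2(dt)$ (resp.\ $L^2(e^{at}\,dt)$) gives $\mathcal{H}f\le\mathcal{H}f^++\mathcal{H}f^-$ (resp.\ for $\mathcal{H}_a$) edgewise, so it suffices to treat $0\le f\in\mathcal{C}_0(V)$. For such $f$, positivity of the semigroup together with connectedness gives $u(t,x):=e^{-t\Delta}f(x)>0$ for all $x$ and $t>0$, which is what legitimizes the negative powers of $u$ below (on $\{u=0\}$ I use the convention $0\cdot 0^{p-2}=0$).

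The heart is an edgewise Hölder splitting. Fix an edge $\{x,y\}$ and write $a=u(t,x)$, $b=u(t,y)$. Since $p\le 2$, I factor $|a-b|^2=|a-b|^2(a\vee b)^{p-2}\,(a\vee b)^{2-p}$ and bound $(a\vee b)^{2-p}\le(\mathcal{M}f(x)\vee\mathcal{M}f(y))^{2-p}$, where $\mathcal{M}f(x):=\sup_{t>0}u(t,x)$. Integrating in $t$, raising to the power $p/2$, summing $\mu_{xy}$ over edges and applying Hölder with exponents $\tfrac{2}{2-p}$ and $\tfrac{2}{p}$ yields
\[
\|\mathcal{H}f\|_{\ell^p(E,\mu)}^p\le\Big(\sum_{\{x,y\}}\mu_{xy}(\mathcal{M}f(x)\vee\mathcal{M}f(y))^p\Big)^{\frac{2-p}{2}}\Big(\sum_{\{x,y\}}\mu_{xy}\int_0^\infty|a-b|^2(a\vee b)^{p-2}\,dt\Big)^{\frac{p}{2}}.
\]
The first factor is exactly where \eqref{e:bounded Laplacian} enters, replacing Dungey's local doubling \eqref{a:local doubling}: using $(\mathcal{M}f(x)\vee\mathcal{M}f(y))^p\le\mathcal{M}f(x)^p+\mathcal{M}f(y)^p$ and routing each edge's contribution to its two endpoints converts the edge sum into $\sum_x\deg_x\mathcal{M}f(x)^p\le M\sum_x\nu_x\mathcal{M}f(x)^p=M\|\mathcal{M}f\|_{\ell^p(V,\nu)}^p$, which is $\lesssim\|f\|_{\ell^p(V,\nu)}^p$ by Stein's maximal theorem for the symmetric submarkovian semigroup $e^{-t\Delta}$ on $\ell^p(V,\nu)$, see \cite{Stein70}. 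For the second factor I use the elementary inequality $(a-b)^2(a\vee b)^{p-2}\le\frac{1}{p-1}(a^{p-1}-b^{p-1})(a-b)$, valid for $a,b\ge 0$ and $1<p\le 2$, together with the energy identity $-\frac{d}{dt}\|u(t)\|_{\ell^p(V,\nu)}^p=p\langle u^{p-1},\Delta u\rangle_\nu=p\sum_{\{x,y\}}\mu_{xy}(u^{p-1}(x)-u^{p-1}(y))(u(x)-u(y))$; integrating in $t$ telescopes to $\le\|f\|_{\ell^p(V,\nu)}^p$. The two factors then combine, the exponents adding to total power $p$ on $\|f\|_{\ell^p(V,\nu)}$, which is \eqref{pleq}.

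For \eqref{eq:alpha pleq} the same splitting applies verbatim with the extra factor $e^{at}$ carried inside the second (energy) integral only; the maximal factor is untouched. It then remains to control $\int_0^\infty e^{at}\mathcal{E}(t)\,dt$, where $\mathcal{E}(t):=\langle u^{p-1},\Delta u\rangle_\nu=-\tfrac{1}{p}\frac{d}{dt}\|u(t)\|_{\ell^p(V,\nu)}^p$. Here the positive spectral gap enters: the Stroock--Varopoulos inequality $\langle u^{p-1},\Delta u\rangle_\nu\ge\frac{4(p-1)}{p^2}\langle u^{p/2},\Delta u^{p/2}\rangle_\nu$ combined with $\langle g,\Delta g\rangle_\nu\ge\lambda_1\|g\|_{\ell^2(V,\nu)}^2$ applied to $g=u^{p/2}$ gives $-\frac{d}{dt}\|u\|_{\ell^p}^p\ge\beta\|u\|_{\ell^p}^p$ with $\beta=\tfrac{4(p-1)}{p}\lambda_1$, hence $\|u(t)\|_{\ell^p}^p\le e^{-\beta t}\|f\|_{\ell^p}^p$. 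Since $\int_t^\infty\mathcal{E}(s)\,ds=\tfrac{1}{p}\|u(t)\|_{\ell^p}^p$, an integration by parts shows $\int_0^\infty e^{at}\mathcal{E}(t)\,dt<\infty$ and is $\lesssim\|f\|_{\ell^p(V,\nu)}^p$ whenever $a<\beta$; as $\beta=\tfrac{4(p-1)}{p}\lambda_1\ge 2\lambda_1(p-1)$ for $p\in(1,2]$, the stated range $a<2\lambda_1(p-1)$ is covered, proving \eqref{eq:alpha pleq}.

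I expect the main obstacle to be the edgewise mechanism itself: establishing the two numerical inequalities with the correct constants and, crucially, passing from edge sums to vertex sums through \eqref{e:bounded Laplacian}, which is the single place that makes the argument work without any doubling hypothesis. The remaining points are routine but must be checked: the applicability of the maximal theorem on $\ell^p(V,\nu)$, the finiteness of the energy identity and the vanishing of the boundary term as $t\to\infty$ for $f\in\mathcal{C}_0(V)$, and the interpretation of the weight $(a\vee b)^{p-2}$ on the null set where $u$ vanishes.
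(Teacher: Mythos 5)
Your proposal is correct, and it takes a genuinely different route from the paper's, even though both are implementations of Stein's argument. The paper first passes from the edge functional $\mathcal{H}_af$ to Dungey's vertex functional $\mathcal{H}_{p,a}f$ built from the pseudo-gradient $\Gamma_p$ (Lemma \ref{lem:littlewood key}, which rests on the symmetrization inequality of Lemma \ref{lem:sym}), and then estimates $\|\mathcal{H}_{p,a}f\|_{\ell^p(V,\nu)}$ by the splitting $(\mathcal{H}_{p,a}f)^2\le C(f^*)^{2-p}J_a$ with $J_a=-\int_0^\infty e^{at}(\partial_t+\Delta)(u^p)\,dt$, H\"older at the vertices, Stein's maximal theorem (Lemma \ref{lem:semigroup lem}), and $\|J_a\|_{\ell^1(V,\nu)}\lesssim\|f\|_{\ell^p(V,\nu)}^p$ via Lemmas \ref{lem:laplace zero} and \ref{lem:derivative}; the spectral gap enters through the interpolated decay $\|e^{-t\Delta}f\|_{\ell^p(V,\nu)}^p\le e^{-2\lambda_1(p-1)t}\|f\|_{\ell^p(V,\nu)}^p$ (Lemma \ref{lem:operator norm}). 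You never introduce $\Gamma_p$: you stay on edges throughout, splitting $|Du|^2$ by the weight $(\alpha\vee\beta)^{p-2}$ where $\alpha=u(t,x)$, $\beta=u(t,y)$, applying H\"older over edges, and controlling the dissipation factor by the two-point inequality $(\alpha-\beta)^2(\alpha\vee\beta)^{p-2}\le\frac{1}{p-1}(\alpha^{p-1}-\beta^{p-1})(\alpha-\beta)$ (correct, by the mean value theorem since $p-2\le 0$) together with the $\ell^p$-energy identity; for the gap case you use Stroock--Varopoulos (valid on graphs, by Cauchy--Schwarz in the pointwise two-point form) instead of Riesz--Thorin. The two proofs are in fact cousins: your $\int_0^\infty e^{at}\mathcal{E}(t)\,dt=-\frac1p\int_0^\infty e^{at}\partial_t\|u(t)\|_{\ell^p(V,\nu)}^p\,dt$ is exactly $\frac1p\|J_a\|_{\ell^1(V,\nu)}$, and both arguments invoke \eqref{e:bounded Laplacian} exactly once, to convert an edge sum into a vertex sum, together with the maximal function raised to the power $2-p$. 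What your route buys: it bypasses the $\gamma_p$-calculus and Lemma \ref{lem:littlewood key} altogether, and your decay rate $\beta=\frac{4(p-1)}{p}\lambda_1\ge 2(p-1)\lambda_1$ proves \eqref{eq:alpha pleq} on the strictly larger range $a<\frac{4(p-1)}{p}\lambda_1$ when $p<2$, so the stated range $a<2\lambda_1(p-1)$ is comfortably covered. What the paper's route buys: it recycles the $\Gamma_p$ machinery already set up for Theorem \ref{thm:main thm1}, so its marginal cost within the paper is small. The technical points you defer as routine --- interchanging $\frac{d}{dt}$ with the infinite sum, and the summation by parts identifying $\langle u^{p-1},\Delta u\rangle_\nu$ with the edge form --- are indeed routine under \eqref{e:bounded Laplacian}: they are precisely the content of the paper's Lemmas \ref{lem:derivative} and \ref{lem:laplace zero}, whose proofs (maximal-function domination plus Fubini) apply verbatim in your setting.
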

%A variant of the above result \eqref{pleq} with a modified functional was obtained in \cite[Lemma~4.5]{BBL16} for jump processes using probabilistic arguments.

\medskip

Finally, we consider the $L^p$ boundedness of Riesz transforms on graphs with positive spectral gap of the Laplacian.
On Riemannian manifolds, the positive spectral gap of the Laplacian yields a general conclusion that the Riesz transform is bounded for any $1<p<\infty$ under some assumptions of volume growth properties and short time heat kernel estimates, see \cite{AuscherCoulhonDuongHofmann04} or Section~\ref{s:Riesz transform}. Furthermore, under the same assumptions, Ji, Kunstmann and Weber \cite{JiKunstmannWeber10} generalized these results to Riemannian manifolds possessing a positive essential spectral gap.

We will generalize these results to graphs endowed with bounded Laplacians. 

 %By the functional calculus of spectral theory, we define the operator
%$$\Delta^{1/2}=\int_{0}^\infty \sqrt{\lambda}dE_{\lambda},$$ where $\{E_{\lambda}\}$ are the spectral measures associated to the Lapalcian $\Delta.$

%On graphs, if one follows the reasoning of \cite{CoulhonDuongLi03} and \cite{JiKunstmannWeber10} for Riemannian manifolds, the local doubling condition \eqref{a:local doubling} and short time heat kernel estimates are necessarily assumed.
%We observe that one can avoid these assumptions by utilizing the boundedness of Laplacians and prove the following theorem.

\begin{thm}\label{thm:main theorem}
Let $G=(V,E,\nu,\mu)$ be a weighted graph satisfying \eqref{e:bounded Laplacian}. Suppose that $G$ has a positive spectral gap, then for any $p\in (1,\infty),$
$$
\||Df|\|_{\ell^p(E,\mu)}\approx \|\Delta^{1/2} f\|_{\ell^p(V,\nu)}\approx \|f\|_{\ell^p(V,\nu)},\ \ \forall f\in {\mathcal C}_0(V).
$$
Moreover, for $p\in [2,\infty),$
$$
\||\nabla f|\|_{\ell^p(V,\nu)}\leq C\|\Delta^{1/2}f\|_{\ell^p(V,\nu)},\ \ \forall f\in {\mathcal C}_0(V).
$$
\end{thm}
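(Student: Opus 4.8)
The plan is to exploit the positive spectral gap to first establish the middle equivalence $\|\Delta^{1/2}f\|_{\ell^p(V,\nu)}\approx\|f\|_{\ell^p(V,\nu)}$ for every $p\in(1,\infty)$, and then to deduce the edge estimate by an elementary $\ell^p$ bound on one side and a duality argument on the other.

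First I would record two facts about the semigroup. Since $\Delta$ arises from a Dirichlet form it is sub-Markovian, so $e^{-t\Delta}$ is a contraction on $\ell^p(V,\nu)$ for all $p\in[1,\infty]$, while the spectral gap $\lambda_1:=\inf\sigma(\Delta)>0$ gives $\|e^{-t\Delta}\|_{\ell^2\to\ell^2}=e^{-\lambda_1 t}$. Interpolating (Riesz--Thorin) between $\ell^2$ and $\ell^1$ for $p\le 2$, and between $\ell^2$ and $\ell^\infty$ for $p\ge 2$, yields a rate $\lambda_p>0$ with $\|e^{-t\Delta}\|_{\ell^p\to\ell^p}\le e^{-\lambda_p t}$ for every $p\in(1,\infty)$. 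Feeding this into the subordination formula $\Delta^{-1/2}=\pi^{-1/2}\int_0^\infty t^{-1/2}e^{-t\Delta}\,dt$ shows that $\Delta^{-1/2}$ is bounded on $\ell^p(V,\nu)$, whence $\|f\|_{\ell^p(V,\nu)}\le C\|\Delta^{1/2}f\|_{\ell^p(V,\nu)}$; the reverse inequality holds because $\Delta$ is bounded on every $\ell^p$ with norm at most $2M$ (a direct consequence of \eqref{e:bounded Laplacian}), so that $\Delta^{1/2}=\Delta\,\Delta^{-1/2}$ is bounded too. This passage from the $\ell^2$ spectral gap to an $\ell^p$ decay rate, followed by subordination, is the technical heart of the argument and the only place where the hypothesis is genuinely used.

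For the first equivalence, the upper bound is immediate: from $|f(x)-f(y)|^p\le 2^{p-1}(|f(x)|^p+|f(y)|^p)$ and summation over edges one gets $\||Df|\|_{\ell^p(E,\mu)}\le C\|f\|_{\ell^p(V,\nu)}$ for all $p\in[1,\infty)$ directly from \eqref{e:bounded Laplacian}, and chaining with the previous paragraph gives $\||Df|\|_{\ell^p(E,\mu)}\le C\|\Delta^{1/2}f\|_{\ell^p(V,\nu)}$. For the reverse inequality I would use duality: writing $\langle\Delta^{1/2}f,h\rangle=\langle\Delta f,\Delta^{-1/2}h\rangle$ (valid since $\Delta^{1/2}=\Delta^{-1/2}\Delta$ is self-adjoint on $\ell^2$ and $f,h\in{\mathcal C}_0(V)$) and expanding by the Dirichlet form identity $\langle\Delta f,g\rangle=\sum_{\{x,y\}\in E}\mu_{xy}(f(x)-f(y))(g(x)-g(y))$, Hölder on $(E,\mu)$ bounds this by $\||Df|\|_{\ell^p(E,\mu)}\,\||D\Delta^{-1/2}h|\|_{\ell^{p'}(E,\mu)}$, and the last factor is $\le C\|\Delta^{-1/2}h\|_{\ell^{p'}(V,\nu)}\le C\|h\|_{\ell^{p'}(V,\nu)}$ by the same edge estimate together with the $\ell^{p'}$-boundedness of $\Delta^{-1/2}$. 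Taking the supremum over $\|h\|_{\ell^{p'}(V,\nu)}\le 1$ gives $\|\Delta^{1/2}f\|_{\ell^p(V,\nu)}\le C\||Df|\|_{\ell^p(E,\mu)}$, which closes the chain of equivalences.

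Finally, for the $|\nabla f|$ statement with $p\ge 2$, I would note that $|\nabla f|^2(x)\le M|f(x)|^2+(A|f|^2)(x)$, where $Ag(x):=\nu_x^{-1}\sum_y\mu_{xy}g(y)$ is bounded on $\ell^q(V,\nu)$ for every $q\in[1,\infty]$ (again by \eqref{e:bounded Laplacian}, e.g. since $A=\frac{\deg}{\nu}\,\mathrm{Id}-\Delta$). Applying this with $q=p/2\ge 1$ gives $\||\nabla f|\|_{\ell^p(V,\nu)}\le C\|f\|_{\ell^p(V,\nu)}$, and composing with $\|f\|_{\ell^p(V,\nu)}\le C\|\Delta^{1/2}f\|_{\ell^p(V,\nu)}$ from the first step yields the claimed bound. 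The restriction $p\ge 2$ enters precisely here, since $A$ must act on $\ell^{p/2}$; this is consistent with the failure of the $|\nabla f|$ Riesz estimate for $1<p<2$ exhibited in Example~\ref{ex:non MIp}.
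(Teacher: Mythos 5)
Your proof is correct, but it reaches the crucial inequality by a genuinely different route than the paper. The paper splits the theorem into Theorem \ref{thm:one part} plus Lemma \ref{thm:bounded inverse}: the hard direction $\|f\|_{\ell^p(V,\nu)}\leq C\||Df|\|_{\ell^p(E,\mu)}$ is obtained geometrically --- the spectral gap forces a positive Cheeger constant (plug characteristic functions into \eqref{e:l2 spectral gap}), hence the $\ell^1$ Sobolev inequality \eqref{e:Sobolev}, which is upgraded to $\ell^p$ by substituting $g^p$ and applying H\"older --- while the $\ell^p$ boundedness of $\Delta^{-1/2}$ is cited from \cite{CoulhonDuong99,JiKunstmannWeber10}. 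You instead prove the $\Delta^{-1/2}$ bound yourself (semigroup decay on $\ell^p$ by Riesz--Thorin, then subordination; this is in substance the proof in the cited references, cf.\ Lemma \ref{lem:operator norm}), and then you replace the isoperimetric step entirely by duality: $\langle\Delta^{1/2}f,h\rangle=\langle\Delta f,\Delta^{-1/2}h\rangle$, Green's formula, and H\"older on $(E,\mu)$ give $\|\Delta^{1/2}f\|_{\ell^p(V,\nu)}\leq C\||Df|\|_{\ell^p(E,\mu)}$, after which the chain closes through $\|f\|_{\ell^p(V,\nu)}\leq C\|\Delta^{1/2}f\|_{\ell^p(V,\nu)}$. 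What the paper's route buys is the standalone equivalence $\||Df|\|_{\ell^p(E,\mu)}\approx\|f\|_{\ell^p(V,\nu)}$ down to $p=1$ (Theorem \ref{thm:one part}), with the geometric content isolated from the spectral calculus; what your route buys is economy --- no Cheeger constant, no $\ell^1$-to-$\ell^p$ Sobolev bootstrap, and a self-contained treatment of $\Delta^{-1/2}$ --- at the cost of being confined to $1<p<\infty$, since the duality needs $\Delta^{-1/2}$ bounded on $\ell^{p'}$ with $p'<\infty$ (which is all the theorem asks for). Two small points should be made explicit when writing this up: estimate \eqref{eq:bound1} is applied to $\Delta^{-1/2}h$, which is not finitely supported, so note that its proof works verbatim for arbitrary functions in $\ell^{p'}(V,\nu)$; and Green's formula $\langle\Delta f,g\rangle=\sum_{\{x,y\}\in E}\mu_{xy}D_{xy}f\,D_{xy}g$ is legitimate here because $f\in{\mathcal C}_0(V)$ and the graph is locally finite, so all sums involved are finite. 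Your treatment of the $p\geq 2$ gradient bound (via the averaging operator acting on $\ell^{p/2}$) is likewise a mild variant of the paper's, which instead invokes \eqref{eq:bound4}; both are elementary and correct.
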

It is remarkable that on graphs it happens that
$$
\||D f|\|_{\ell^p(E,\mu)}\approx \|f\|_{\ell^p(V,\nu)},\ \forall f\in {\mathcal C}_0(V),
$$
which has no counterpart in the continuous setting: by a local scaling argument it cannot be true on manifolds since the order of $|Df|$ is strictly higher than that of $f$.

The next theorem generalizes the result of \cite{JiKunstmannWeber10}. Given a weighted graph $G,$ let $\sigma_{\es}(\Delta)$ denote the essential spectrum of $\Delta.$ Recall that $\lambda\in \sigma_{\es}(\Delta)$ if and only if $\lambda$ is either an accumulation point of the set $\sigma(\Delta)$ or an eigenvalue of infinite multiplicity, see for instance \cite{ReedSimon80}. We say that $G$ has a positive essential spectral gap if $\inf\sigma_{\es}(\Delta)>0$.

\begin{thm}\label{thm:generalization}
Let $G=(V,E,\nu,\mu)$ be a graph satisfying \eqref{e:bounded Laplacian}. Suppose that $G$ has a positive essential spectral gap and $0\in \sigma(\Delta),$ then $\nu(V):=\sum_x\nu_x<\infty$ and for any $p\in (1,\infty)$
$$
\||D f|\|_p\approx \|f-\overline{f}\|_p\approx \|\Delta^{1/2} f\|_p,\ \ \forall f\in {\mathcal C}_0(V),
$$
where $\overline{f}=\frac{1}{\nu(V)}\sum_{x\in V}f(x)\nu_x.$
\end{thm}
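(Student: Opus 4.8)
The plan is to strip off the kernel of $\Delta$ and reduce every assertion to an estimate on mean-zero functions, where the positive essential gap furnishes a genuine (effective) spectral gap. First I would record the structural consequences of the hypotheses. Since $\inf\sigma_{\es}(\Delta)>0$ the value $0$ does not lie in $\sigma_{\es}(\Delta)$, so $0\in\sigma(\Delta)$ forces $0$ to be an isolated eigenvalue of finite multiplicity, and $\lambda_1:=\inf\left(\sigma(\Delta)\setminus\{0\}\right)>0$. If $\Delta\phi=0$ with $\phi\in\ell^2(V,\nu)$, then $0=\langle\Delta\phi,\phi\rangle=Q(\phi)=\tfrac12\sum_{x,y}\mu_{xy}(\phi(x)-\phi(y))^2$ forces $\phi$ to be constant by connectedness; a nonzero constant lies in $\ell^2(V,\nu)$ precisely when $\nu(V)<\infty$, which proves the first claim and identifies $\ker\Delta=\R\cdot\mathbf 1$. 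Let $P_0$ be the orthogonal projection of $\ell^2(V,\nu)$ onto $\ker\Delta$; since $\nu(V)<\infty$ one has $P_0f=\overline f$. As both $D$ and $\Delta^{1/2}$ annihilate constants, $Df=D(f-\overline f)$ and $\Delta^{1/2}f=\Delta^{1/2}(f-\overline f)$, so all three quantities depend only on $f-\overline f=(I-P_0)f$, and it suffices to compare each with $\|f-\overline f\|_p$.

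The one tool replacing the spectral-gap hypothesis of Theorem~\ref{thm:main theorem} is the effective decay
\[
\|(e^{-t\Delta}-P_0)g\|_{\ell^p(V,\nu)}\le C_p\,e^{-\beta_p t}\,\|g\|_{\ell^p(V,\nu)},\qquad t>0,\ p\in(1,\infty),
\]
with $\beta_p>0$. I would derive it by interpolation: on $\ell^2(V,\nu)$ the spectral theorem gives operator norm $e^{-\lambda_1 t}$, while on $\ell^1(V,\nu)$ and $\ell^\infty(V,\nu)$ the operator $e^{-t\Delta}-P_0$ is bounded uniformly in $t$ (the semigroup is Markovian and $P_0$ is a contraction because $\nu(V)<\infty$). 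Riesz--Thorin interpolation between the $\ell^2$ decay and the $\ell^1$ (resp. $\ell^\infty$) bound produces exponential decay on every $\ell^p$; finiteness of $\nu(V)$ is exactly what makes this available.

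Three of the four comparisons are then short. From \eqref{e:bounded Laplacian} and $|a-b|^p\le 2^{p-1}(|a|^p+|b|^p)$ one gets, for any $g$, $\||Dg|\|_{\ell^p(E,\mu)}^p\le 2^{p-1}\sum_x|g(x)|^p\deg_x\le 2^{p-1}M\,\|g\|_{\ell^p(V,\nu)}^p$; with $g=f-\overline f$ this yields $\||Df|\|_{\ell^p(E,\mu)}\le C\|f-\overline f\|_p$. The $\ell^p$-boundedness of $\Delta^{1/2}$ for bounded Laplacians gives $\|\Delta^{1/2}f\|_p\le C\|f-\overline f\|_p$. For the reverse square-root bound I would insert the subordination identity $f-\overline f=\frac{1}{\sqrt\pi}\int_0^\infty t^{-1/2}(e^{-t\Delta}-P_0)\Delta^{1/2}f\,dt$ (legitimate since $\Delta^{1/2}f$ is mean-zero) and bound its norm by $\frac{1}{\sqrt\pi}\int_0^\infty t^{-1/2}\|(e^{-t\Delta}-P_0)\Delta^{1/2}f\|_p\,dt$, which converges because $t^{-1/2}$ is integrable at $0$ and the decay above kills the tail; hence $\|f-\overline f\|_p\le C\|\Delta^{1/2}f\|_p$, so $\|f-\overline f\|_p\approx\|\Delta^{1/2}f\|_p$. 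Combined with the gradient bound, this already gives $\||Df|\|_{\ell^p(E,\mu)}\le C\|\Delta^{1/2}f\|_p$ for all $p\in(1,\infty)$.

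The remaining reverse Riesz inequality $\|\Delta^{1/2}f\|_p\le C\||Df|\|_{\ell^p(E,\mu)}$ is the only missing piece and is where I expect the main difficulty. I would prove it by duality, taking the supremum only over finitely supported $h\in\mathcal{C}_0(V)$ (dense in $\ell^{p'}(V,\nu)$ and automatically in $\ell^2(V,\nu)$, which defuses integrability issues when $p>2$). Replacing $h$ by its mean-zero part $h-\overline h$ (which leaves the pairing unchanged, since $\Delta^{1/2}f$ is mean-zero) and setting $g:=\Delta^{-1/2}(h-\overline h)$, so that $\Delta^{1/2}g=h-\overline h$, the bilinear identity $\langle Df,Dg\rangle_\mu=Q(f,g)=\langle\Delta^{1/2}f,\Delta^{1/2}g\rangle_\nu$ together with H\"older on edges gives
\[
|\langle\Delta^{1/2}f,h\rangle_\nu|=|\langle Df,Dg\rangle_\mu|\le\||Df|\|_{\ell^p(E,\mu)}\,\||Dg|\|_{\ell^{p'}(E,\mu)}\le C\,\||Df|\|_{\ell^p(E,\mu)}\,\|h\|_{\ell^{p'}(V,\nu)},
\]
the last step using the gradient bound at exponent $p'$ and the $\ell^{p'}$-boundedness of $\Delta^{-1/2}$ on mean-zero functions (again the effective decay). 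Taking the supremum over $h$ yields the reverse Riesz bound, and chaining the four comparisons gives $\||Df|\|_{\ell^p(E,\mu)}\approx\|f-\overline f\|_p\approx\|\Delta^{1/2}f\|_p$ for every $p\in(1,\infty)$, as claimed. The same conclusion could instead be reached by re-running the Littlewood--Paley--Stein argument behind Theorem~\ref{thm:main thm2} with $\lambda_1$ as effective gap, applied to $f-\overline f$, but the duality route is more economical.
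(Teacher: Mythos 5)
Your proposal is correct, and it closes the key inequalities by a genuinely different mechanism than the paper, even though the opening reduction is the same. Both arguments begin identically: $0$ is isolated in $\sigma(\Delta)$ by the essential-gap hypothesis, the kernel consists of constants by connectedness (you argue via $Q(\phi)=0$, the paper cites the $\ell^2$ Liouville theorem — both fine), hence $\nu(V)<\infty$, and everything is reduced to the mean-zero part with effective gap $\lambda_1>0$. From there the paper simply says: apply the proof of Theorem~\ref{thm:main theorem} to $\Delta_0$ on $\ell^p_0(V,\nu)$, which routes the crucial lower bound $\|f-\overline f\|_p\lesssim \||Df|\|_p$ through the Cheeger constant and the Sobolev inequality \eqref{e:Sobolev} (Theorem~\ref{thm:one part}), together with the cited boundedness of $\Delta^{-1/2}$ (Lemma~\ref{thm:bounded inverse}). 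You never touch isoperimetry: you make the projected-semigroup decay $\|(e^{-t\Delta}-P_0)g\|_p\le Ce^{-\beta_p t}\|g\|_p$ explicit via Riesz--Thorin (this is exactly the mechanism behind Lemma~\ref{thm:bounded inverse} in \cite{CoulhonDuong99,JiKunstmannWeber10}, so you are re-proving rather than citing it), obtain the $\Delta^{\pm 1/2}$ bounds on mean-zero functions by subordination, and then get the reverse Riesz inequality $\|\Delta^{1/2}f\|_p\le C\||Df|\|_{\ell^p(E,\mu)}$ by duality, pairing with $g=\Delta^{-1/2}(h-\overline h)$ and using $\langle Df,Dg\rangle_\mu=\langle\Delta^{1/2}f,\Delta^{1/2}g\rangle_\nu$ plus the gradient bound of Lemma~\ref{l:bounded operator} at exponent $p'$. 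There is a real trade-off here: the paper's route is shorter on the page given its earlier results, but ``the same argument'' is not literally the same — in the finite-measure setting the spectral gap only holds on $\ell^2_0$, characteristic functions are not mean-zero, and the $g^p$ bootstrap in Theorem~\ref{thm:one part} needs a Neumann/Poincar\'e-type adaptation (e.g.\ subtracting medians), all of which the paper leaves implicit. Your duality route sidesteps that entirely and is self-contained; its only cost is redoing the interpolation/decay lemma. One small point worth making explicit if you write this up: the pairing step needs $\Delta^{1/2}f$ to be mean-zero, which uses $\mathbf{1}\in\ell^2(V,\nu)$ (i.e.\ $\nu(V)<\infty$) and $\Delta^{1/2}\mathbf{1}=0$, and the Green-type identity $\langle\Delta f,g\rangle_\nu=\tfrac12\sum_{x,y}\mu_{xy}D_{xy}f\,D_{xy}g$ should be justified by absolute convergence (H\"older on edges), which is available since $|Df|\in\ell^p(E,\mu)$ and $|Dg|\in\ell^{p'}(E,\mu)$.
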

%
%The paper is organized as follows: In Section~\ref{s:setting} we recall some basic facts on weighted graphs and properties for bounded Laplacians. Section~\ref{sec:3} is devoted to several properties related to the boundedness of Riesz transforms. In Section~\ref{s:Modified Stein}, we prove Theorem~\ref{thm:main thm1} following Dungey's argument. The boundedness of Riesz transforms on graphs with a positive spectral gap is obtained in Section \ref{s:Riesz transform}. The last section is about Littlewood-Paley-Stein estimates on graphs. For convenience, the constants $C$ in the context may change from line to line.

%%%%%%%%%%%%%%%%%%%%%%%%%%%%%%%%%%%%%%%%%%%%%%%%%%%%%%%%%%%%%%%%%%%%
%%%%%%%%%%%%%%%%%%%%%%%%%%%%%%%%%%%%%%%%%%%%%%%%%%%%%%%%%%%%%%

\section{Preliminaries}\label{s:setting}
Under the assumption  \eqref{e:bounded Laplacian}, one can show that the Laplacian $\Delta$, as well as the gradients $\nabla$ and $D$, are bounded operators on $\ell^p(V,\nu)$ for any $p\in [1,\infty].$
\begin{lemma}\label{l:bounded operator} Let $G=(V,E,\nu,\mu)$ be a weighted graph satisfying \eqref{e:bounded Laplacian}. Then for  $p\in[1,\infty],$
\begin{eqnarray}&&\||D f|\|_{\ell^p(E,\mu)}\leq \|f\|_{\ell^p(V,\nu)},\label{eq:bound1}\\
&&\|\Delta f\|_{\ell^p(V,\nu)}\leq \|f\|_{\ell^p(V,\nu)},\label{eq:bound2}
\end{eqnarray}
for all $f\in {\mathcal C}_0(V)$. Moreover,
 \begin{eqnarray}
&&\||Df|\|_{\ell^p(E,\mu)} \le C\||\nabla f|\|_{\ell^p(V,\nu)}, \quad p\in[1,2] \label{eq:bound3}\\
&&\||\nabla f|\|_{\ell^p(V,\nu)}\leq C\||Df|\|_{\ell^p(E,\mu)}, \quad p\in[2,\infty]\label{eq:bound4}
\end{eqnarray}
for all $f\in {\mathcal C}_0(V)$.
\end{lemma}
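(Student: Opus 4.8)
The plan is to derive all four inequalities from elementary pointwise estimates; the single structural input is the bound $\deg_x\le M\nu_x$ furnished by \eqref{e:bounded Laplacian}, which will be used each time to convert a power of $\deg_x$ into the same power of $\nu_x$. For the two absolute bounds \eqref{eq:bound1} and \eqref{eq:bound2} I would interpolate between the endpoints $p=1$ and $p=\infty$: both $f\mapsto Df$ and $\Delta$ are linear, so it suffices to estimate their $\ell^1$ and $\ell^\infty$ operator norms and then invoke Riesz--Thorin.

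For \eqref{eq:bound1}, writing $\||Df|\|_{\ell^1(E,\mu)}=\tfrac12\sum_{x,y}\mu_{xy}|f(x)-f(y)|$ and using the triangle inequality together with the symmetry $\mu_{xy}=\mu_{yx}$ collapses the double sum into $\sum_x\deg_x|f(x)|\le M\|f\|_{\ell^1(V,\nu)}$, while $\||Df|\|_{\ell^\infty(E,\mu)}=\sup_{\{x,y\}\in E}|f(x)-f(y)|\le 2\|f\|_{\ell^\infty(V,\nu)}$ is immediate. The identical pair of computations applied to $\Delta f(x)=\tfrac1{\nu_x}\sum_y\mu_{xy}(f(x)-f(y))$ yields the $\ell^1(V,\nu)$ and $\ell^\infty(V,\nu)$ bounds needed for \eqref{eq:bound2}. (Equivalently, \eqref{eq:bound1} follows at once from $|a-b|^p\le 2^{p-1}(|a|^p+|b|^p)$ and symmetry.) In every case the resulting constant is a power of $2$ times a power of $M$ depending only on $p$.

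The comparison inequalities \eqref{eq:bound3} and \eqref{eq:bound4} are the real content, and I would prove both by a single vertex-by-vertex argument. Fix $x$, put $d_y:=f(x)-f(y)$, and regard $\mu_{x\cdot}/\deg_x$ as a probability measure on the neighbours of $x$. Then $|\nabla f|(x)^2=\tfrac1{2\nu_x}\sum_y\mu_{xy}d_y^2$ is, up to the factor $\deg_x/(2\nu_x)$, the $\mu_{x\cdot}/\deg_x$-average of $d_y^2$, whereas the contribution of $x$ to $\||Df|\|_{\ell^p(E,\mu)}^p$ equals $\tfrac12\sum_y\mu_{xy}|d_y|^p$, the same average of $|d_y|^p$ multiplied by $\tfrac12\deg_x$. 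Applying Jensen's inequality with the power $t\mapsto t^{p/2}$ compares these two averages: for $p\le 2$ the power is concave and one gets $\sum_y\mu_{xy}|d_y|^p\le \deg_x^{1-p/2}\big(\sum_y\mu_{xy}d_y^2\big)^{p/2}$, whereas for $p\ge 2$ it is convex and the inequality reverses.

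It then remains to replace powers of $\deg_x$ by powers of $\nu_x$, which is exactly where \eqref{e:bounded Laplacian} enters. For $p\le 2$ the exponent $1-\tfrac p2$ is nonnegative, so $\deg_x^{1-p/2}\le M^{1-p/2}\nu_x^{1-p/2}$; summing the pointwise bound over $x$ and collecting constants yields \eqref{eq:bound3}. For $p\ge 2$ the exponent $\tfrac p2-1$ is nonnegative, the reversed Jensen inequality reads $\big(\sum_y\mu_{xy}d_y^2\big)^{p/2}\le \deg_x^{p/2-1}\sum_y\mu_{xy}|d_y|^p$, and the same conversion gives \eqref{eq:bound4}. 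I do not expect a serious obstacle; the only point demanding care is that \emph{both} the sense of Jensen's inequality \emph{and} the nonnegativity of the exponent in $\deg_x^{\theta}\le M^{\theta}\nu_x^{\theta}$ conspire to pin \eqref{eq:bound3} to $p\le 2$ and \eqref{eq:bound4} to $p\ge 2$. This one-sidedness is intrinsic: \eqref{e:bounded Laplacian} never provides a reverse inequality $\nu_x\le c\,\deg_x$, so the opposite comparisons cannot be expected, consistent with $|\nabla f|$ and $|Df|$ being genuinely inequivalent as in Example~\ref{ex:non equiv}.
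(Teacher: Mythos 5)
Your proposal is correct, and on what you rightly call the real content of the lemma it coincides with the paper's own proof: for \eqref{eq:bound3} and \eqref{eq:bound4} the paper performs exactly your computation --- rewrite the sums as $\mu_{x\cdot}/\deg_x$-averages, apply the power-mean (Jensen) inequality with exponent $p/2$ (the paper labels it H\"older), and absorb $(\deg_x/\nu_x)^{p/2-1}$ into $M^{p/2-1}$ using \eqref{e:bounded Laplacian}, with the sign of the exponent pinning each inequality to its range of $p$ just as you explain. The only genuine divergence is in \eqref{eq:bound1}--\eqref{eq:bound2}: the paper proves \eqref{eq:bound1} directly from $|a-b|^p\le 2^{p-1}(|a|^p+|b|^p)$ and symmetry (the route you mention parenthetically), and then obtains \eqref{eq:bound2} by combining \eqref{eq:bound1} with the intermediate Jensen-type estimate $\|\Delta f\|_{\ell^p(V,\nu)}\le C\||Df|\|_{\ell^p(E,\mu)}$, whereas you estimate the $\ell^1$ and $\ell^\infty$ operator norms of $D$ and $\Delta$ and invoke Riesz--Thorin. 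Both routes are elementary and valid (for Riesz--Thorin one should fix an orientation of each edge so that $D$ becomes an honest linear operator into $\ell^p(E,\mu)$, which is cosmetic); yours handles $p=\infty$ seamlessly, while the paper's yields the comparison of $\|\Delta f\|_{\ell^p(V,\nu)}$ with $\||Df|\|_{\ell^p(E,\mu)}$ as a by-product. Two small blemishes, both shared with the paper rather than introduced by you: the lemma as stated has constant $1$ in \eqref{eq:bound1}--\eqref{eq:bound2}, but both your proof and the paper's produce constants depending on $p$ and $M$ (harmless for all later uses, which only need boundedness); and your Jensen step in \eqref{eq:bound4} is written for finite $p$, so $p=\infty$ requires the one-line bound $|\nabla f|(x)\le(\deg_x/(2\nu_x))^{1/2}\sup_{e\in E}|Df|(e)\le (M/2)^{1/2}\||Df|\|_{\ell^\infty(E,\mu)}$ --- a case the paper likewise leaves to the reader.
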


\begin{proof} For the first assertion,
 \begin{eqnarray*} \||Df|\|_{\ell^p(E,\mu)}^p&=&\sum_{x,y}\mu_{xy}|f(x)-f(y)|^p\leq C_p\sum_{x,y}\mu_{xy}(|f(x)|^p+|f(y)|^p)\\
  &=&2C_p\sum_{x,y}|f(x)|^p\mu_{xy}=2C_p\sum_x|f(x)|^p\deg_x\\&\leq&2C_pM\|f\|_{\ell^p(V,\nu)}^p,\end{eqnarray*} where we have used \eqref{e:bounded Laplacian} in the last inequality.

  By H\"older's inequality and \eqref{e:bounded Laplacian},
  \begin{eqnarray*}
    \|\Delta f\|_{\ell^p(V,\nu)}^p%&=&\sum_{x}\nu_x\left|\sum_{y}\frac{\mu_{xy}}{\nu_x}(f(x)-f(y))\right|^p\\
    &=&\sum_{x}\nu_x\left(\frac{\deg_x}{\nu_x}\right)^p\left|\sum_{y}\frac{\mu_{xy}}{\deg_x}(f(x)-f(y))\right|^p\\
    &\leq &\sum_x\nu_x\left(\frac{\deg_x}{\nu_x}\right)^p\sum_{y}\frac{\mu_{xy}}{\deg_x}|f(x)-f(y)|^p\\
    &\leq &M^{p-1}\sum_{x,y}\mu_{xy}|f(x)-f(y)|^p=2M^{p-1}\||D f|\|_{\ell^p(E,\mu)}^p.
  \end{eqnarray*}

 For the second assertion, we only prove the case for $p\in[1,2]$ (a similar argument applies for $p\in [2,\infty]$). By H\"older's inequality and \eqref{e:bounded Laplacian},
  \begin{eqnarray*}
    \||\nabla f|\|_{\ell^p(V,\nu)}^p%&=&\sum_{x}\nu_x\left(\sum_{y\sim x}\frac{\mu_{xy}}{\nu_x}(f(x)-f(y))^2\right)^{\frac{p}{2}}\\
    &=&\sum_{x}\nu_x\left(\frac{\deg_x}{\nu_x}\right)^{\frac{p}{2}}\left(\sum_{y}\frac{\mu_{xy}}{\deg_x}(f(x)-f(y))^2\right)^{\frac{p}{2}}\\
    &\geq&\sum_{x}\nu_x\left(\frac{\deg_x}{\nu_x}\right)^{\frac{p}{2}}\sum_{y}\frac{\mu_{xy}}{\deg_x}|f(x)-f(y)|^p\\
    &\geq&M^{\frac{p}{2}-1}\sum_{x,y}\mu_{xy}|f(x)-f(y)|^p.
  \end{eqnarray*}

\end{proof}

%\begin{rem}\label{r:bounded operator}This lemma settles the boundedness of Laplacians.
%  In fact, one can show that $\Delta_{\nu,\mu}$ is a bounded operator on $\ell^2(V,\nu)$ if and only if \eqref{e:bounded Laplacian} holds.
%  For the necessity, it suffices to plug $f=\delta_x$ (the delta function concentrated at $x,$ for any $x\in V$) into $$\|\Delta f\|_{\ell^2(V,\nu)}\leq C\|f\|_{\ell^2(V,\nu)}.$$
%\end{rem}

Here is another property of bounded Laplacians.

\begin{lemma}\label{lem:laplace zero}
  Let $G=(V,E,\nu,\mu)$ be a weighted graph satisfying \eqref{e:bounded Laplacian} and $f\in \ell^1(V,\nu).$ Then
  \begin{equation}\label{eq:laplace zero}
    \sum_{x\in V}\Delta f(x)\nu_x=0.
  \end{equation}
\end{lemma}
\begin{proof}
  By equation \eqref{eq:bound2}, $\Delta f\in \ell^1(V,\nu).$ Then \begin{equation*}
    \sum_{x\in V}\Delta f(x)\nu_x=\sum_{x\in V}\nu_x\left(\sum_{y}\frac{\mu_{xy}}{\nu_x}(f(x)-f(y))\right)<\infty.
  \end{equation*} Consider
  \begin{eqnarray*}
    &&\sum_{x\in V}\nu_x\sum_{y}\frac{\mu_{xy}}{\nu_x}|f(x)-f(y)|=\sum_{x,y}\mu_{xy}|f(x)-f(y)|\\
    &\leq &\sum_{x,y}\mu_{xy}(|f(x)|+|f(y)|)\leq C\sum_{x\in V}|f(x)|\nu_x<\infty.
  \end{eqnarray*}
  By Fubini's theorem, \begin{eqnarray*}\sum_{x\in V}\Delta f(x)\nu_x&=&\sum_{x,y}\mu_{xy}(f(x)-f(y))\\
  &=&\sum_{x,y}\mu_{yx}(f(y)-f(x))=-\sum_{x,y}\mu_{xy}(f(x)-f(y)),\end{eqnarray*} where we have swaped $x$ and $y$ in the second equality.
  This proves the lemma.
\end{proof}

Going back to the case of normalized Laplacians, the following condition is stronger than \eqref{a:local doubling}:
\begin{equation}\label{a:local uniform}
  \inf_{y\sim x}\frac{\mu_{xy}}{\deg_x}>0.
\end{equation}
Indeed, let $\delta=\inf_{y\sim x}\frac{\mu_{xy}}{\deg_x},$ for any $x\sim y,$
$$\delta \deg_x\leq \mu_{xy}\leq \deg_x.$$ So the number of neighbors of $x$ is bounded above by $1/{\delta}.$ Note that for any $y\sim x,$ $\deg_y\leq \frac{1}{\delta}\mu_{xy}\leq \frac{1}{\delta}\deg_x.$ This implies $(LD).$

\begin{lemma}\label{lem:equiv}
  Let $(V,E,\deg,\mu)$ be a weighted graph satisfying \eqref{a:local uniform}. Then for any $p\in [1,\infty)$
  $$\||\nabla f|\|_{\ell^p(V,\deg)}\approx\||Df|\|_{\ell^p(E,\mu)},$$
for all  $f\in {\mathcal C}_0(V)$.
\end{lemma}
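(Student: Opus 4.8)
The plan is to reduce the global comparison to a purely local, vertex-by-vertex comparison of $\ell^2$- and $\ell^p$-norms on the fibre of neighbours of each vertex, the point being that condition \eqref{a:local uniform} supplies a uniform lower bound that forces these two local norms to be equivalent. Note first that half of the equivalence is already free of charge: Lemma~\ref{l:bounded operator} gives $\||Df|\|_{\ell^p(E,\mu)}\le C\||\nabla f|\|_{\ell^p(V,\deg)}$ for $p\in[1,2]$ and the reverse bound $\||\nabla f|\|_{\ell^p(V,\deg)}\le C\||Df|\|_{\ell^p(E,\mu)}$ for $p\in[2,\infty]$, and neither uses \eqref{a:local uniform}. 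So the genuine content is to produce the two missing directions, and this is precisely where \eqref{a:local uniform} enters.

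First I would record the normalisation that \eqref{a:local uniform} provides. Setting $\delta:=\inf_{y\sim x}\mu_{xy}/\deg_x>0$, for each $x$ the quantities $P_x(y):=\mu_{xy}/\deg_x$ define a probability measure on the neighbours $y\sim x$ (since $\sum_y\mu_{xy}=\deg_x$) each of whose atoms has mass at least $\delta$; in particular $x$ has at most $1/\delta$ neighbours. Writing $g_x(y):=|f(x)-f(y)|$, both sides of the claim become weighted sums of local $P_x$-norms:
\[
\||\nabla f|\|_{\ell^p(V,\deg)}^p = 2^{-p/2}\sum_{x\in V}\deg_x\Big(\sum_{y\sim x}P_x(y)\,g_x(y)^2\Big)^{p/2},\qquad \||Df|\|_{\ell^p(E,\mu)}^p=\tfrac12\sum_{x\in V}\deg_x\sum_{y\sim x}P_x(y)\,g_x(y)^p.
\]
Thus it suffices to compare $\|g_x\|_{L^p(P_x)}$ with $\|g_x\|_{L^2(P_x)}$ uniformly in $x$ and then sum against the weights $\deg_x$.

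The key local estimate is a reverse-H\"older inequality that holds on any probability space whose atoms have mass at least $\delta$: since $g_x(y)^r\le\delta^{-1}P_x(y)g_x(y)^r\le\delta^{-1}\sum_{z\sim x}P_x(z)g_x(z)^r$, one gets $\max_{y\sim x}g_x(y)\le\delta^{-1/r}\|g_x\|_{L^r(P_x)}$, and inserting this into $\sum_{y}P_x(y)g_x(y)^s\le(\max_y g_x(y))^{s-r}\sum_y P_x(y)g_x(y)^r$ yields, for $0<r\le s$,
\[
\|g_x\|_{L^r(P_x)}\le\|g_x\|_{L^s(P_x)}\le\delta^{1/s-1/r}\|g_x\|_{L^r(P_x)},
\]
the left inequality being the usual monotonicity of $L^q$-norms on a probability space. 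Applying this with $\{r,s\}=\{p,2\}$ gives $\|g_x\|_{L^p(P_x)}\approx\|g_x\|_{L^2(P_x)}$ with constants depending only on $\delta$ and $p$, uniformly in $x$; summing against $\deg_x$ in the two displayed identities then yields $\||Df|\|_{\ell^p(E,\mu)}\approx\||\nabla f|\|_{\ell^p(V,\deg)}$ for every $p\in[1,\infty)$. I do not expect a serious obstacle here: the only substantive step is the reverse-H\"older bound, which is exactly the direction that fails without a lower mass bound, consistent with the fact (see Example~\ref{ex:non equiv}) that the two norms need not be equivalent under the weaker local doubling condition \eqref{a:local doubling} alone.
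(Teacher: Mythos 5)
Your proof is correct and takes essentially the same route as the paper: the paper also rewrites both norms as sums over vertices of $\deg_x$ times local norms with respect to the weights $\mu_{xy}/\deg_x\in[\delta,1]$ and then invokes the uniform local equivalence of the $\ell^2$- and $\ell^p$-type expressions. Your reverse-H\"older lemma on probability spaces with atoms of mass at least $\delta$ is precisely the justification of the ``$\approx$'' step that the paper's one-line computation leaves implicit, so the two arguments differ only in the level of detail.
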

\begin{proof}
  Note that \eqref{a:local uniform} implies that the graph is uniformly locally finite and $$0<\delta\leq \frac{\mu_{xy}}{\deg_x}\leq 1,\quad \forall x\sim y.$$ Then for any $p\in [1,\infty)$
  \begin{eqnarray*}\||\nabla f|\|_{\ell^p(V,\deg)}^p&=&\sum_{x}\deg_x\left(\sum_{y}\frac{\mu_{xy}}{\deg_x}(f(x)-f(y))^2\right)^{\frac{p}{2}}\\
  &\approx&\sum_{x}\deg_x\left(\sum_{y}\frac{\mu_{xy}}{\deg_x}|f(x)-f(y)|^p\right)=\||Df|\|_{\ell^p(E,\mu)}^p.
  \end{eqnarray*}
\end{proof}

The following example shows that the assumption \eqref{a:local uniform} is necessary for the previous lemma to hold. More precisely, Lemma \ref{lem:equiv} does not hold if one replaces \eqref{a:local uniform} with \eqref{a:local doubling} for $p\in(1,2).$
%\red{\begin{example}\label{ex:non equiv}
%  Let $\epsilon>0.$ Let $(\Z,E,\deg,\mu)$ be an infinite one-dimensional lattice where $\{i,i+1\}\in E,$ for any $i\in\Z,$  $\mu_{0,1}=\epsilon$ and $\mu_{i,i+1}=1$ if $i\neq 0.$ This graph satisfies \eqref{a:local doubling} with a doubling constant independent of $\epsilon.$ However, one can show that the following cannot be true: Let $p\in[1,2),$ $$\||\nabla f|\|_{\ell^p(V,\deg)}\leq C\||Df|\|_{\ell^p(E,\mu)},\quad \forall f\in \R^V,$$ where $C$ is a uniform constant independent of $\epsilon.$ This can be seen by setting \begin{equation*}
%f(i)=\left\{\begin{array}{ll}
%0, & i\leq 0 \\
%1, & i>1.
%\end{array}\right.
%\end{equation*}
%\comment{$f\not\in {\mathcal C}_0(V)$ ! Need to truncate !}
%\end{example}}
\begin{example}\label{ex:non equiv}
  Let $\epsilon>0.$ Let $(\Z,E,\deg,\mu)$ be an infinite one-dimensional lattice where $\{i,i+1\}\in E,$ for any $i\in\Z,$  $\mu_{0,1}=\epsilon$ and $\mu_{i,i+1}=1$ if $i\neq 0.$ This graph satisfies \eqref{a:local doubling} with a doubling constant independent of $\epsilon.$ However, one can show that the following statement cannot be true: for $p\in(1,2),$
$$
\||\nabla f|\|_{\ell^p(V,\deg)}\leq C\||Df|\|_{\ell^p(E,\mu)},\quad \forall f\in {\mathcal C}_0(V),
$$
where $C$ is a uniform constant independent of $\epsilon.$ We argue by contradiction. For any $k\in\N,$ plugging into the above estimate \begin{equation*}
f_k(i)=\left\{\begin{array}{ll}
0, & i\leq 0, \\
\max\{1-\frac{i}{k},0\}, & i\geq1,
\end{array}\right.
\end{equation*}
and letting $k\to\infty,$ we have $\epsilon^{\frac12}\leq C\epsilon^{\frac{1}{p}}$ which is impossible for $p\in(1,2)$ and small $\epsilon.$
\end{example}
%\comment{$f\not\in {\mathcal C}_0(V)$ ! Need to truncate ! \blue{the example is modified}}

\bigskip

\section{Modified Stein's argument on graphs}\label{s:Modified Stein}
For a suitable function $f$ defined on a Riemannian manifold, it holds for $1<p<\infty$ that
$$
\Delta_M(f^p)=pf^{p-1}\Delta_M f-p(p-1)f^{p-2}|\nabla f|^2.
$$
Hence
$$
p(p-1)|\nabla f|^2=pf\Delta_M f-f^{2-p}\Delta_M (f^p).
$$
This is a chain rule in the continuous setting which  usually fails in the discrete setting. To overcome this difficulty on graphs, the so-called \lq\lq pseudo-gradient\rq\rq ~ was introduced in \cite{Dungey08}. That is,  for a suitable function $f:V\to\R_{+}$, set
$$
\Gamma_p(f):=pf\Delta f-f^{2-p}\Delta (f^p),
$$
where $\Delta$ is the Laplacian on functions defined on vertices. For brevity, we introduce a two-variable function
$$
\gamma_p(\alpha,\beta):=p\alpha(\alpha-\beta)-\alpha^{2-p}(\alpha^p-\beta^p),\forall \alpha,\beta\geq0.
$$
Then for any $x\in V,$
\begin{eqnarray}
 \quad\quad \Gamma_p(f)(x)&=&\sum_{y}\frac{\mu_{xy}}{\nu_x}[pf(x)(f(x)-f(y))-f(x)^{2-p}(f(x)^p-f(y)^p)]
  \label{eq:formula Gamma}\\
  &=&\sum_{y}\frac{\mu_{xy}}{\nu_x}\gamma_p(f(x),f(y)).\nonumber
\end{eqnarray}
Note that the function $\gamma_p(\alpha,\beta)$ is not symmetric in variables $\alpha$ and $\beta.$ The following calculus lemma in \cite[Lemma~3.2]{Dungey08} is useful.
\begin{lemma}
For $1<p\leq 2,$ $\alpha,\beta\geq 0,$ \begin{equation*}
\gamma_p(\alpha,\beta)=\left\{\begin{array}{ll}
p(p-1)(\alpha-\beta)^2\int_{0}^1\frac{(1-u)\alpha^{2-p}}{((1-u)\alpha+u\beta)^{2-p}}du, & (\alpha,\beta)\neq (0,0), \\
0, & (\alpha,\beta)= (0,0).
\end{array}\right.
\end{equation*}
\end{lemma}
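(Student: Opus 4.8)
The plan is to obtain the identity from two applications of the fundamental theorem of calculus along the straight segment joining $\alpha$ and $\beta$, followed by an interchange in the order of integration. Throughout I set $h(u):=(1-u)\alpha+u\beta$, so that $h(0)=\alpha$, $h(1)=\beta$ and $h'(u)=\beta-\alpha$. The degenerate cases are disposed of immediately: for $(\alpha,\beta)=(0,0)$ the definition gives $\gamma_p=0$; for $\alpha=0<\beta$ with $1<p<2$ both sides vanish because $\alpha^{2-p}=0$; and for $\alpha=0$, $p=2$ the identity is checked by hand using $\gamma_2(\alpha,\beta)=(\alpha-\beta)^2$. It therefore suffices to treat $\alpha>0$, where every expression below is finite and differentiable in $u$.

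First I would rewrite the second term of $\gamma_p$. Integrating $\frac{d}{du}h(u)^p=p\,h(u)^{p-1}(\beta-\alpha)$ over $[0,1]$ yields $\alpha^p-\beta^p=p(\alpha-\beta)\int_0^1 h(u)^{p-1}\,du$, whence
\begin{equation*}
\gamma_p(\alpha,\beta)=p\alpha^{2-p}(\alpha-\beta)\int_0^1\left[\alpha^{p-1}-h(u)^{p-1}\right]du.
\end{equation*}
Next I would expand the bracket by the same device: since $\alpha^{p-1}=h(0)^{p-1}$ and $\frac{d}{dv}h(v)^{p-1}=(p-1)h(v)^{p-2}(\beta-\alpha)$, one gets $\alpha^{p-1}-h(u)^{p-1}=(p-1)(\alpha-\beta)\int_0^u h(v)^{p-2}\,dv$. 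Substituting this in produces the double integral
\begin{equation*}
\gamma_p(\alpha,\beta)=p(p-1)\alpha^{2-p}(\alpha-\beta)^2\int_0^1\int_0^u h(v)^{p-2}\,dv\,du,
\end{equation*}
and exchanging the order of integration over the triangle $\{0\le v\le u\le 1\}$ replaces $\int_0^1\int_0^u\,dv\,du$ by $\int_0^1(1-v)h(v)^{p-2}\,dv$. This is precisely the asserted formula once $h(v)^{p-2}=((1-v)\alpha+v\beta)^{p-2}$ is restored and $\alpha^{2-p}$ is absorbed into the integrand.

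The only genuine obstacle is the integrability of $h(v)^{p-2}$ when $\beta=0$, where $h(v)^{p-2}=(\alpha(1-v))^{p-2}$ develops a singularity at $v=1$ since $p-2<0$. Here I would observe that the quantity actually integrated is $(1-v)h(v)^{p-2}\sim\alpha^{p-2}(1-v)^{p-1}$ near $v=1$, which is integrable because $p-1>0$; the same bound shows that the inner integral $\int_0^u h(v)^{p-2}\,dv$ stays bounded as $u\to1$, so the double integral converges and Fubini's theorem, applied to the nonnegative integrand $h(v)^{p-2}$, justifies the interchange. With these finiteness checks in place both uses of the fundamental theorem of calculus are legitimate and the identity follows.
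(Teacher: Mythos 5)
Your proof is correct, including the careful treatment of the degenerate cases $\alpha=0$ and of the integrable singularity when $\beta=0$. Note that the paper itself gives no proof of this lemma --- it is quoted directly from Dungey's paper (\cite[Lemma~3.2]{Dungey08}); your two applications of the fundamental theorem of calculus followed by Fubini on the triangle $\{0\le v\le u\le 1\}$ are precisely the derivation of the second-order Taylor formula with integral remainder for $t\mapsto t^p$ expanded at $\alpha$ and evaluated at $\beta$, which is the standard route to this identity.
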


The pseudo-gradient $\Gamma_p(f)$ plays the role of a substitute to the gradient squared of $f$ in the discrete setting, while its expression is rather complicated and depends on $p\in(1,2].$ The key property of $\Gamma_p(f)$ is the following (see \cite[pp 122]{Dungey08}): for any $f\geq 0$ and $x\in V,$
$$
0\leq \Gamma_p(f)(x)\leq (p-1)|\nabla f|^2(x).
$$

 As pointed out in \cite{Dungey08}, the downside of the pseudo-gradient is that it cannot be pointwise bounded from below by the gradient squared. Indeed, one cannot hope that there is a uniform constant $C$  such that
$$
\Gamma_p(f)(x)\geq C |\nabla f|^2(x),\quad\quad \forall f\geq 0, x\in V.
$$
This can be seen as follows (see \cite[pp 117]{Dungey08}): let $f\geq 0$ such that $f(x)=0$ and $f(y)>0$ for some $y\sim x,$ then $\Gamma_p(f)(x)=0$ and $|\nabla f|^2(x)>0.$ To overcome this defect, Dungey introduced the local doubling condition \eqref{a:local doubling} for graphs and used a local average argument to show that $|\nabla f|^2$ is dominated by a local average of $\Gamma_p(f)$ (see \cite[Proposition~3.1~and~Lemma~3.3]{Dungey08}).
%which is sufficient for his reasoning.
%That is the reason why the local doubling property is involved in Theorem \cite[Theorem 1.1]{Dungey08}.
%By using a local average argument, see \cite[Proposition~3.1~and~Lemma~3.3]{Dungey08}, Dungey proved his results under the assumption of \eqref{a:local doubling}.

%????Using the local doubling condition, Dungey proved the following results, see \cite[Corollary 1.2]{Dungey08}. This implies many results.
%\begin{thm}
%  Let $(V,E,\deg,\mu)$ satisfying the property \eqref{a:local doubling} and $p\in (1,2].$ Then there is a constant $C_p$ such that
%  \begin{equation}\label{eq:interpolation}\||\nabla f|\|_{\ell^p(V,\deg)}\leq C_p\|f\|_{\ell^p(V,\deg)}^{1/2}\|\Delta f\|_{\ell^p(V,\deg)}^1/2,\end{equation} for all $f\in \ell^p(V,\deg).$
%\end{thm}

We first provide a counterexample to show that
\eqref{a:local doubling} is necessary in some sense if we want to use $$\||\nabla f|\|_{\ell^p(V,\deg)}$$ as the $\ell^p$ energy of the function $f.$
More precisely, if we do not assume \eqref{a:local doubling}, we can construct a graph which does not satisfy  property \eqref{eq:Dungey MIp} for $p\in(1,2).$

Let us record a necessary condition for    \eqref{eq:Dungey MIp}.
\begin{prop} Let $p\in [1,\infty)$.
Let $(V,E,\deg,\mu)$ be a normalized weighted graph. If  \eqref{eq:Dungey MIp} holds, then for any $x\in V,$
\begin{equation}\label{eq:char MIp}
\left(\deg_x+\sum_{y\sim x}\mu_{xy}^{\frac{p}{2}}\deg_y^{1-\frac{p}{2}}\right)^2
\leq C_p\deg_x\br{\deg_x+\sum_{y\sim x}\mu_{xy}^p\deg_y^{1-p}}.
\end{equation}
\end{prop}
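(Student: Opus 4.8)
The plan is to test the hypothesized inequality \eqref{eq:Dungey MIp} against the simplest possible finitely supported function, namely the indicator $f=\mathds{1}_{x}$ of the fixed vertex $x$, and to read off \eqref{eq:char MIp} by computing the three norms explicitly. Since we are in the normalized case $\nu\equiv\deg$, the relevant operators act as $\wt{\Delta}_\mu g(z)=\frac{1}{\deg_z}\sum_w\mu_{zw}(g(z)-g(w))$ and $|\nabla g|^2(z)=\frac{1}{2\deg_z}\sum_w\mu_{zw}|g(z)-g(w)|^2$.

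First I would compute $\|f\|_{\ell^p(V,\deg)}^p=\deg_x$, since $f$ is supported at the single vertex $x$. Next, evaluating the Laplacian: at $x$ itself every neighboring term contributes, giving $\wt{\Delta}_\mu f(x)=\deg_x^{-1}\sum_{y}\mu_{xy}=1$; at each neighbor $y\sim x$ only the edge back to $x$ survives, since $f\equiv0$ off $x$, giving $\wt{\Delta}_\mu f(y)=-\mu_{xy}/\deg_y$; and $\wt{\Delta}_\mu f\equiv0$ elsewhere. Summing in $\ell^p(V,\deg)$ yields $\|\wt{\Delta}_\mu f\|_{\ell^p(V,\deg)}^p=\deg_x+\sum_{y\sim x}\mu_{xy}^p\deg_y^{1-p}$, which is exactly, up to the power $1/p$, the second factor on the right-hand side of \eqref{eq:char MIp}.

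The analogous computation for the gradient gives $|\nabla f|^2(x)=\tfrac12$ and $|\nabla f|^2(y)=\mu_{xy}/(2\deg_y)$ for $y\sim x$, whence $\||\nabla f|\|_{\ell^p(V,\deg)}^p=2^{-p/2}\bigl(\deg_x+\sum_{y\sim x}\mu_{xy}^{p/2}\deg_y^{1-p/2}\bigr)$. Taking this to the power $2/p$ produces the left-hand side of \eqref{eq:char MIp} up to the factor $2^{-1}$. Substituting the three expressions into \eqref{eq:Dungey MIp} and raising both sides to the power $p$ then gives precisely \eqref{eq:char MIp}, the numerical factors $2^{-1}$ and $2^{-p/2}$ being absorbed into the constant $C_p$.

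There is no serious obstacle here: the argument is a direct substitution, and the only thing requiring care is the bookkeeping of the degree weights together with the observation that at a neighbor $y$ of $x$ exactly one edge contributes to both $\wt{\Delta}_\mu f(y)$ and $|\nabla f|^2(y)$. The structural point — the reason the two sides of \eqref{eq:char MIp} take their particular shape — is that the gradient sees the edge weights through the exponent $p/2$, owing to the square root inside the $\ell^p$ norm, whereas the Laplacian sees them through the exponent $p$; the indicator function isolates these two distinct scalings cleanly at the single vertex $x$.
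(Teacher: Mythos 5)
Your proposal is correct and is exactly the paper's own argument: the paper's proof consists of plugging $f=\delta_x$ into \eqref{eq:Dungey MIp}, and you have simply carried out that substitution in full detail, with all three norm computations and the bookkeeping of constants done correctly.
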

\begin{proof}
This follows from plugging into \eqref{eq:Dungey MIp} $f(y)=\delta_x(y).$
\end{proof}

%\red{\begin{example}\label{ex:non MIp}
%Let $T$ be infinite tree with the expanding sequence $\{d_i\}_{i=0}^\infty,$ i.e. for each one in the layer $i$ has $d_i$ successors where $d_i=1$ for $i=0,1,2,3,$ $d_{2n}=1$ and $d_{2n+1}=2n+1$ for $n\geq 2.$ For each edge, we put edge weight one. This graph does not satisfy \eqref{eq:char MIp}, hence gives us a counterexample to \eqref{eq:Dungey MIp}.
%\end{example}
%Indeed, take $x\in T$ in the layer $2n$, $n\ge 3$. Then $\deg_x=2$ and there are two neighbors of $x$ in the layers $2n-1$ and $2n+1$, which we denote by $y_{2n-1}$ and $y_{2n+1}$ respectively. Note that $\deg_{y_{2n-1}}= 2n$ and $\deg_{y_{2n+1}}= 2n+2$, hence
%$$
%\left(\deg_x+\sum_{y\sim x}\mu_{xy}^{\frac{p}{2}}\deg_y^{1-\frac{p}{2}}\right)^2
%=\left(2+(2n)^{1-\frac{p}{2}}+(2n+2)^{1-\frac{p}{2}}\right)^2 \gtrsim n^{2-p},
%$$
%and
%$$
%\deg_x\br{\deg_x+\sum_{y\sim x}\mu_{xy}^p\deg_y^{1-p}}
%=2 \br{2+(2n)^{1-p}+(2n+2)^{1-p}} \lesssim n^{1-p}.
%$$
%The above two estimates contradict \eqref{eq:char MIp} as $n$ goes to infinity.}
\begin{example}\label{ex:non MIp}
Let $T=(V,E)$ be an infinite tree with the expanding sequence $\{d_i\}_{i=0}^\infty,$ i.e.  each vertex in the layer $i$ has $d_i$ successors where $d_i=1$ for $i=0,1,2,3,$ $d_{2n}=1$ and $d_{2n+1}=2n+1$ for $n\geq 2.$ Set $\mu\equiv1$ on $E$ and $\nu\equiv\deg$ on $V.$ This graph does not satisfy \eqref{eq:char MIp} for $p\in(1,2)$, hence gives us a counterexample to \eqref{eq:Dungey MIp}.
\end{example}
Indeed, take $x\in T$ in the layer $2n$, $n\ge 3$. Then $\deg_x=2$ and there are two neighbors of $x$ in the layers $2n-1$ and $2n+1$, which we denote by $y_{2n-1}$ and $y_{2n+1}$ respectively. Note that $\deg_{y_{2n-1}}= 2n$ and $\deg_{y_{2n+1}}= 2n+2$, hence for $p\in(1,2),$
$$
\left(\deg_x+\sum_{y\sim x}\mu_{xy}^{\frac{p}{2}}\deg_y^{1-\frac{p}{2}}\right)^2
=\left(2+(2n)^{1-\frac{p}{2}}+(2n+2)^{1-\frac{p}{2}}\right)^2 \geq C n^{2-p},
$$
and
$$
\deg_x\br{\deg_x+\sum_{y\sim x}\mu_{xy}^p\deg_y^{1-p}}
=2 \br{2+(2n)^{1-p}+(2n+2)^{1-p}} \leq 8.
$$
The above two estimates contradict \eqref{eq:char MIp} as $n$ goes to infinity.

\begin{rem} \label{counter}
Recall that on $\ell^p(V, \nu)$ with $\nu\equiv\deg$, the Riesz transform is bounded if
$$
\||\nabla f|\|_{\ell^p(V,\deg)}\leq C_p \|\wt{\Delta}_\mu^{1/2} f\|_{\ell^p(V,\deg)}.
$$
This implies \eqref{eq:Dungey MIp} by an interpolation argument following verbatim \cite{Komatsu66, Carron07}. Hence Example \ref{ex:non MIp} is also a counterexample to the boundedness of the Riesz transform for all $p\in (1,2)$.
\end{rem}

The following lemma shows that the symmetrized version of $\gamma_p(\alpha,\beta)$ is equivalent, up to some constant, to $(\alpha-\beta)^2$ for any $\alpha,\beta\geq 0.$
\begin{lemma}\label{lem:sym} For any $\alpha,\beta\geq 0,$ $p\in(1,2],$
   \begin{equation}
   \label{eq:symm gamma}(p-1)(\alpha-\beta)^2\leq \gamma_p(\alpha,\beta)+\gamma_p(\beta,\alpha)\leq p(\alpha-\beta)^2.\end{equation}
\end{lemma}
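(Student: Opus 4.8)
The plan is to reduce everything to an elementary one-variable inequality by way of an exact algebraic identity for the symmetrized expression. First I would expand $\gamma_p(\alpha,\beta)=p\alpha(\alpha-\beta)-\alpha^{2-p}(\alpha^p-\beta^p)$ together with its swap $\gamma_p(\beta,\alpha)$. Using $\alpha^{2-p}\alpha^p=\alpha^2$ and collecting terms, the two copies of the mixed monomial $p\alpha\beta$ combine and the surviving pieces assemble into
\[
\gamma_p(\alpha,\beta)+\gamma_p(\beta,\alpha)=p(\alpha-\beta)^2-(\alpha^p-\beta^p)(\alpha^{2-p}-\beta^{2-p}).
\]
Here the quadratic term arises from $p\alpha(\alpha-\beta)+p\beta(\beta-\alpha)=p(\alpha-\beta)^2$, while the remaining terms assemble into the single product on the right. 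I would verify this identity by direct expansion; it is the only computation in the proof and is routine.

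For the upper bound in \eqref{eq:symm gamma}, observe that for $p\in(1,2)$ the exponents $p$ and $2-p$ are both positive, so $t\mapsto t^p$ and $t\mapsto t^{2-p}$ are increasing on $[0,\infty)$; hence the factors $(\alpha^p-\beta^p)$ and $(\alpha^{2-p}-\beta^{2-p})$ share the same sign and their product is nonnegative. Dropping it from the identity yields $\gamma_p(\alpha,\beta)+\gamma_p(\beta,\alpha)\le p(\alpha-\beta)^2$ at once. For $p=2$ the product vanishes and the sum equals $2(\alpha-\beta)^2$, consistent with both bounds.

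For the lower bound it suffices, by the identity, to prove the key inequality
\[
(\alpha^p-\beta^p)(\alpha^{2-p}-\beta^{2-p})\le(\alpha-\beta)^2,\qquad \alpha,\beta\ge0,
\]
since subtracting its right-hand side from $p(\alpha-\beta)^2$ leaves exactly $(p-1)(\alpha-\beta)^2$. Assuming without loss of generality $\alpha\ge\beta$, the case $\beta=0$ is immediate, and for $\beta>0$ I would set $x=\alpha/\beta\ge1$ and divide through by $\beta^2$, reducing the claim to $(x^p-1)(x^{2-p}-1)\le(x-1)^2$. Expanding the left-hand side and using $x^p x^{2-p}=x^2$, this is equivalent to $x^p+x^{2-p}\ge2x$, that is, to $x^{p-1}+x^{1-p}\ge2$, which is just the AM--GM inequality applied to $x^{p-1}$ and $x^{-(p-1)}$.

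There is essentially no hard step here: the content lies entirely in spotting the collapsing identity, after which both inequalities are elementary and the lower bound is AM--GM in disguise. The only points requiring a word of care are the degenerate configurations, namely $\beta=0$ and the convention $0^0=1$ when $p=2$, which I would dispatch directly or by continuity, since both sides of \eqref{eq:symm gamma} are continuous in $(\alpha,\beta)$ on the relevant region.
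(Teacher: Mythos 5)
Your proof is correct and follows essentially the same route as the paper: the same collapsing identity $\gamma_p(\alpha,\beta)+\gamma_p(\beta,\alpha)=p(\alpha-\beta)^2-(\alpha^{2-p}-\beta^{2-p})(\alpha^p-\beta^p)$, the upper bound by nonnegativity of the product, and the lower bound reduced to the AM--GM inequality $t+t^{-1}\geq 2$ (the paper writes it as $(\beta/\alpha)^{p-1}+(\alpha/\beta)^{p-1}\geq 2$ without normalizing to one variable). Your explicit treatment of the degenerate case $\beta=0$ is a small point of care that the paper's formulation technically glosses over, but the substance is identical.
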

\begin{proof}
  By an elementary computation,
  \begin{equation*}
    \gamma_p(\alpha,\beta)+\gamma_p(\beta,\alpha)=p(\alpha-\beta)^2-(\alpha^{2-p}-\beta^{2-p})(\alpha^p-\beta^p).
  \end{equation*} It suffices to prove the first inequality (the second one is trivial). The lemma follows from
  \begin{eqnarray*}(\alpha^{2-p}-\beta^{2-p})(\alpha^p-\beta^p)&=&\alpha^2+\beta^2-\alpha^{2-p}\beta^p-\beta^{2-p}\alpha^p\\
  &=&\alpha^2+\beta^2-\alpha\beta\left(\Big(\frac{\beta}{\alpha}\Big)^{p-1}+\Big(\frac{\alpha}{\beta}\Big)^{p-1}\right)\\&\leq&(\alpha-\beta)^2.
  \end{eqnarray*}

  % It suffices to estimate the following quantity, for $\alpha\neq \beta,$
  %\begin{eqnarray*}&&\gamma_p(\alpha,\beta)+\gamma_p(\beta,\alpha)\\&=&p(p-1)(\alpha-\beta)^2
  %\left[\int_{0}^1\frac{(1-u)\alpha^{2-p}}{((1-u)\alpha+u\beta)^{2-p}}du+\int_{0}^1\frac{(1-u)\beta^{2-p}}{((1-u)\beta+u\alpha)^{2-p}}du\right]\\
  % &=&p(p-1)(\alpha-\beta)^2\int_{0}^1\frac{(1-u)\alpha^{2-p}+u\beta^{2-p}}{((1-u)\alpha+u\beta)^{2-p}}du, \end{eqnarray*} where the last equality follows from the change of variables, $\widetilde{u}=1-u,$ in the second term in the bracket and rewriting $\widetilde{u}$ as $u.$ We claim that $$\frac{(1-u)\alpha^{2-p}+u\beta^{2-p}}{((1-u)\alpha+u\beta)^{2-p}}\geq \min\{1-u,u\}.$$ This can be seen as follows: If $\alpha> \beta,$ then
  % $$\frac{(1-u)\alpha^{2-p}+u\beta^{2-p}}{((1-u)\alpha+u\beta)^{2-p}}\geq \frac{(1-u)\alpha^{2-p}}{\alpha^{2-p}}=1-u.$$ Similarly, if $\alpha<\beta,$ then $$\frac{(1-u)\alpha^{2-p}+u\beta^{2-p}}{((1-u)\alpha+u\beta)^{2-p}}\geq u.$$

   %Hence, we have
   %\begin{eqnarray*}\gamma_p(\alpha,\beta)+\gamma_p(\beta,\alpha)&\geq& p(p-1)(\alpha-\beta)^2\int_{0}^1\min\{1-u,u\}du\\&=&\frac{1}{4}p(p-1)(\alpha-\beta)^2.\end{eqnarray*}
\end{proof}

%?????????????????\begin{lemma}
%  Let $G=(V,E,\nu,\mu)$ be a graph satisfying \eqref{e:bounded Laplacian}. Then for any $a\in(0,1),$ $0\leq f\in {\mathcal C}_0(V),$
%$$\|\Gamma_p(f)\|_{\ell^a(V,\nu)}\geq C\||Df|\|_{\ell^a(E,\mu)}.$$
%\end{lemma}

\begin{lemma}\label{lem:key Gamma}
Let $G=(V,E,\nu,\mu)$ be a graph satisfying \eqref{e:bounded Laplacian}. There is a constant $C(p,M)$ such that for any $0\leq f\in {\mathcal C}_0(V)$,
\begin{equation}\label{eq:key estimate}
\||D f|\|_{\ell^p(E,\nu)} \le C\norm{\sqrt{\Gamma_p(f)}}_{\ell^p(V,\nu)}.
\end{equation}
\end{lemma}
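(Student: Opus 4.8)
The plan is to pass from the edge quantity $\||Df|\|_{\ell^p(E,\mu)}^p=\tfrac12\sum_{x,y}\mu_{xy}|f(x)-f(y)|^p$ to the vertex quantity $\sum_x\nu_x(\Gamma_p(f)(x))^{p/2}$ in three moves: symmetrize $\gamma_p$, exploit the subadditivity of $t\mapsto t^{p/2}$, and finally reverse a H\"older inequality on the star at each vertex, absorbing the resulting combinatorial factor via \eqref{e:bounded Laplacian}. Note that because $\Gamma_p$ cannot be bounded below by $|\nabla f|^2$ pointwise, no pointwise comparison is available; the summation over edges is what saves the day.

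First I would rewrite each edge term $|f(x)-f(y)|^p=\big((f(x)-f(y))^2\big)^{p/2}$ and invoke Lemma~\ref{lem:sym} in the form $(f(x)-f(y))^2\le (p-1)^{-1}\big(\gamma_p(f(x),f(y))+\gamma_p(f(y),f(x))\big)$. Since $p/2\le 1$, the map $t\mapsto t^{p/2}$ is subadditive, so
$$|f(x)-f(y)|^p\le (p-1)^{-p/2}\Big(\gamma_p(f(x),f(y))^{p/2}+\gamma_p(f(y),f(x))^{p/2}\Big).$$
Summing against $\mu_{xy}$ over all ordered pairs and using $\mu_{xy}=\mu_{yx}$ to merge the two terms by the symmetry $x\leftrightarrow y$, the problem reduces to the single bound
$$\sum_{x,y}\mu_{xy}\,\gamma_p(f(x),f(y))^{p/2}\le C\sum_x\nu_x\Big(\sum_y\tfrac{\mu_{xy}}{\nu_x}\gamma_p(f(x),f(y))\Big)^{p/2},$$
whose right-hand side is exactly $C\sum_x\nu_x(\Gamma_p(f)(x))^{p/2}$ by \eqref{eq:formula Gamma}.

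The heart of the argument is this last inequality, and it is where I expect the only real subtlety. For fixed $x$ set $a_{xy}=\gamma_p(f(x),f(y))\ge0$ and note that I am passing up from a sum of concave powers $\sum_y\mu_{xy}a_{xy}^{p/2}$ to a power of a sum $\big(\sum_y\mu_{xy}a_{xy}\big)^{p/2}$, which runs against the direction of Jensen; the compensating factor must come from the total edge mass at $x$. I would produce it by a H\"older inequality with conjugate exponents $\tfrac{2}{2-p}$ and $\tfrac{2}{p}$ applied to $\mu_{xy}a_{xy}^{p/2}=\mu_{xy}^{1-p/2}\,(\mu_{xy}a_{xy})^{p/2}$, giving
$$\sum_y\mu_{xy}a_{xy}^{p/2}\le\Big(\sum_y\mu_{xy}\Big)^{1-p/2}\Big(\sum_y\mu_{xy}a_{xy}\Big)^{p/2}=\deg_x^{\,1-p/2}\Big(\sum_y\mu_{xy}a_{xy}\Big)^{p/2}.$$

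Finally I would rewrite $\deg_x^{1-p/2}\big(\sum_y\mu_{xy}a_{xy}\big)^{p/2}=(\deg_x/\nu_x)^{1-p/2}\,\nu_x\big(\sum_y\tfrac{\mu_{xy}}{\nu_x}a_{xy}\big)^{p/2}$ and apply \eqref{e:bounded Laplacian}: since $p\le2$ the exponent $1-p/2$ is nonnegative, so $(\deg_x/\nu_x)^{1-p/2}\le M^{1-p/2}$. Summing over $x$ closes the reduced inequality with $C=M^{1-p/2}$, and tracing the constants back through the two reductions (the factor $2$ from symmetrizing cancels the $\tfrac12$ in the edge norm) yields $\||Df|\|_{\ell^p(E,\mu)}\le (p-1)^{-1/2}M^{1/p-1/2}\,\|\sqrt{\Gamma_p(f)}\|_{\ell^p(V,\nu)}$. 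The two points to watch are that $\gamma_p$ is genuinely asymmetric, forcing the use of Lemma~\ref{lem:sym} in place of any pointwise estimate, and that the bounded-Laplacian hypothesis is precisely what absorbs the degree factor produced by the reversed H\"older step.
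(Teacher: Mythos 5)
Your proof is correct and is essentially the paper's own argument run in the opposite direction: the paper starts from $\norm{\sqrt{\Gamma_p(f)}}_{\ell^p(V,\nu)}^p$ and descends to the edge sum via Jensen's inequality for the concave power $t^{p/2}$ against the probability weights $\mu_{xy}/\deg_x$, then symmetrization, subadditivity, and Lemma~\ref{lem:sym} --- exactly the same four ingredients you use, with your ``reversed H\"older'' step on the star at $x$ being identical to the paper's Jensen step, and the bounded-Laplacian hypothesis absorbing the degree factor in the same way. No gaps; your constant $(p-1)^{-1/2}M^{1/p-1/2}$ is the same one implicit in the paper's chain of inequalities.
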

\begin{proof}
By the definition of $\Gamma_p,$ the H\"older inequality, and \eqref{e:bounded Laplacian}, we have
\begin{eqnarray}
\norm{\sqrt{\Gamma_p(f)}}_{\ell^p(V,\nu)}^p&=&\sum_{x}\nu_x\left(\frac{\deg_x}{\nu_x}\right)^{\frac{p}{2}}\left(\sum_{y}\frac{\mu_{xy}}{\deg_x}\gamma_p(f(x),f(y))\right)^{\frac{p}{2}}\nonumber
\\ &\geq&
\sum_{x}\nu_x\left(\frac{\deg_x}{\nu_x}\right)^{\frac{p}{2}}\left(\sum_{y}\frac{\mu_{xy}}{\deg_x}\gamma_p^{\frac{p}{2}}(f(x),f(y))\right)\nonumber
\\ &\geq&
M^{\frac{p}{2}-1}\sum_{x,y}\mu_{xy}\gamma_p^{\frac{p}{2}}(f(x),f(y))\nonumber\\
&=&
\frac12 M^{\frac{p}{2}-1}\sum_{x,y}\mu_{xy}\left(\gamma_p^{\frac{p}{2}}(f(x),f(y))+\gamma_p^{\frac{p}{2}}(f(y),f(x))\right)\label{eq:sym1}
\\   &\geq&
c_pM^{\frac{p}{2}-1}\sum_{x,y}\mu_{xy}\left(\gamma_p(f(x),f(y))+\gamma_p(f(y),f(x))\right)^{\frac{p}{2}}\nonumber
\\  &\geq&
c'_pM^{\frac{p}{2}-1}\sum_{x,y}\mu_{xy}|f(x)-f(y)|^p=C\||D f|\|_{\ell^p(E,\mu)}^p,\nonumber
 \end{eqnarray}
 where we have used the symmetrization in \eqref{eq:sym1} and Lemma \ref{lem:sym} in the last inequality. This proves the lemma.
\end{proof}

The following lemma is the equivalence of \eqref{eq:mainthm 1} and \eqref{eq:mainthm 2}, whose proof in the continuous case can be found in \cite[Proposition 2.5]{Chen15}.
\begin{lemma}\label{GpMIp}
  Let $(V,E,\mu,\nu)$ be a weighted graph. Then for any $p\in(1,\infty),$ \eqref{eq:mainthm 1} is equivalent to \eqref{eq:mainthm 2}.
 \end{lemma}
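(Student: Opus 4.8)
The plan is to prove both implications and to treat the analyticity of the heat semigroup on $\ell^p(V,\nu)$ as the central analytic input, adapting the continuous argument of \cite[Proposition~2.5]{Chen15}. Recall that $e^{-t\Delta}$ is a symmetric submarkovian semigroup, hence a contraction on every $\ell^p(V,\nu)$ and, for $p\in(1,\infty)$, analytic there; in particular there is a constant $C_p$ with $\|\Delta e^{-t\Delta}\|_{\ell^p\to\ell^p}\le C_p/t$ for all $t>0$. (Under \eqref{e:bounded Laplacian} this is immediate because $\Delta$ is bounded, but it holds in general.) Throughout I use that $D$ and $\Delta$ are bounded on $\ell^p$ by Lemma~\ref{l:bounded operator}, so that both \eqref{eq:mainthm 1} and \eqref{eq:mainthm 2}, which are stated on the dense subspace ${\mathcal C}_0(V)$, extend by continuity to all of $\ell^p(V,\nu)$.

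For \eqref{eq:mainthm 2} $\Rightarrow$ \eqref{eq:mainthm 1} I would apply the interpolation inequality to $u=e^{-t\Delta}f$ (legitimate after the density extension above, since $u$ is in general not finitely supported), obtaining
$$\||D e^{-t\Delta}f|\|^2_{\ell^p(E,\mu)}\le C_p\,\|e^{-t\Delta}f\|_{\ell^p(V,\nu)}\,\|\Delta e^{-t\Delta}f\|_{\ell^p(V,\nu)}.$$
Bounding the first factor by $\|f\|_{\ell^p(V,\nu)}$ (contractivity) and the second by $(C_p/t)\|f\|_{\ell^p(V,\nu)}$ (analyticity), then taking square roots, yields \eqref{eq:mainthm 1}.

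For \eqref{eq:mainthm 1} $\Rightarrow$ \eqref{eq:mainthm 2} I would start from the identity $f-e^{-t\Delta}f=\int_0^t\Delta e^{-s\Delta}f\,ds$ and, using the commutation $\Delta e^{-s\Delta}=e^{-s\Delta}\Delta$, write $Df=D e^{-t\Delta}f+\int_0^t D e^{-s\Delta}(\Delta f)\,ds$. By the triangle and Minkowski integral inequalities in $\ell^p(E,\mu)$,
$$\||Df|\|_{\ell^p(E,\mu)}\le \||D e^{-t\Delta}f|\|_{\ell^p(E,\mu)}+\int_0^t \||D e^{-s\Delta}(\Delta f)|\|_{\ell^p(E,\mu)}\,ds.$$
Applying \eqref{eq:mainthm 1} to the first term with data $f$, and inside the integral to the data $\Delta f$, bounds the integrand by $C_p s^{-1/2}\|\Delta f\|_{\ell^p(V,\nu)}$, whose integral over $(0,t)$ is $2C_p t^{1/2}\|\Delta f\|_{\ell^p(V,\nu)}$. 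This gives
$$\||Df|\|_{\ell^p(E,\mu)}\le C_p t^{-1/2}\|f\|_{\ell^p(V,\nu)}+2C_p t^{1/2}\|\Delta f\|_{\ell^p(V,\nu)}.$$
Optimizing the right-hand side over $t>0$ (the function $At^{-1/2}+Bt^{1/2}$ attains its minimum $2\sqrt{AB}$ at $t=A/B$) and squaring produces \eqref{eq:mainthm 2}; the degenerate cases $\|f\|_{\ell^p(V,\nu)}=0$ or $\|\Delta f\|_{\ell^p(V,\nu)}=0$ (the latter forcing $f$ constant, hence $Df=0$) are trivial.

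The conceptual steps are routine; the two points requiring care, which I regard as the main obstacle, are (i) the analyticity bound $\|\Delta e^{-t\Delta}\|_{\ell^p\to\ell^p}\le C_p/t$, which is the classical fact that symmetric submarkovian semigroups are analytic on $\ell^p$ for $1<p<\infty$, and (ii) the justification that \eqref{eq:mainthm 2} may be applied to $e^{-t\Delta}f$, which fails to be finitely supported; this is handled by density of ${\mathcal C}_0(V)$ in $\ell^p(V,\nu)$ together with the boundedness of $D$ and $\Delta$ from Lemma~\ref{l:bounded operator}. Given these, making the integral identity and the Minkowski interchange rigorous in $\ell^p(E,\mu)$ is straightforward.
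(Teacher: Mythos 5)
Your proposal is correct and follows essentially the same route as the paper: the forward direction substitutes $e^{-t\Delta}f$ into \eqref{eq:mainthm 2} and invokes contractivity plus analyticity of the Markov semigroup on $\ell^p$, and the converse uses the identity $f=e^{-t\Delta}f+\int_0^t\Delta e^{-s\Delta}f\,ds$ with \eqref{eq:mainthm 1} applied to $f$ and to $\Delta f$, followed by optimization in $t$. The only difference is that you explicitly address the density/extension issue in applying \eqref{eq:mainthm 2} to the non-finitely-supported function $e^{-t\Delta}f$, a point the paper passes over in silence.
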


\begin{proof}
First assume \eqref{eq:mainthm 2}. Substituting $f$ by $e^{-t\Delta}f$ in \eqref{eq:mainthm 2} yields
\[
\norm{\abs{D e^{-t\Delta}f}}_{\ell^p(E,\mu)}^2 \leq C \norm{\Delta e^{-t\Delta}f}_{\ell^p(V,\nu)} \norm{e^{-t\Delta}f}_{\ell^p(V,\nu)}.
\]
By analyticity of Markov semigroups, see \cite{Stein70}, \cite[Theorem~1.4.2]{Davies89} and \cite[Theorem~8.4.6]{Davies07},
  we obtain
\[
\norm{\abs{D e^{-t\Delta}f}}_{\ell^p(E,\mu)} \leq C t^{-1/2}\Vert f\Vert_{\ell^p(V,\nu)}.
\]

Conversely assume \eqref{eq:mainthm 1}. For any $f\in {\mathcal C}_0(V)$, write the identity
\[
f=e^{-t\Delta}f+\int_{0}^{t}\Delta e^{-s\Delta}f ds,\,\forall t>0.
\]
Then \eqref{eq:mainthm 1} yields
\begin{eqnarray*}
\norm{\abs{D f}}_{\ell^p(E,\mu)} &\leq&
C\norm{\abs{D e^{-t\Delta}f}}_{\ell^p(E,\mu)}+\left\Vert\int_{0}^{t} \abs{D \Delta e^{-s\Delta}f} ds\right\Vert_{\ell^p(E,\mu)}
\\ &\leq&
C t^{-1/2}\Vert f\Vert_{\ell^p(V,\nu)}+\int_{0}^{t} \norm{\abs{D e^{-s\Delta}\Delta f}}_{\ell^p(E,\mu)} ds
\\ &\leq&
C t^{-1/2}\Vert f\Vert_{\ell^p(V,\nu)}+C t^{1/2} \Vert \Delta f\Vert_{\ell^p(V,\nu)}.
\end{eqnarray*}
Taking $t=\Vert f\Vert_{\ell^p(V,\nu)} \Vert \Delta f\Vert_{\ell^p(V,\nu)}^{-1}$, we get \eqref{eq:mainthm 2}.
\end{proof}

Given a function $0\leq f\in \ell^1(V,\nu)\cap \ell^\infty(V,\nu),$ set $u(t,x)=e^{-t\Delta} f(x).$
Since $\partial_t u=-\Delta u,$ by the definition of $\Gamma_p,$
\begin{equation}\label{eq:Gamma equ}\Gamma_p(u)=-u^{2-p}(\partial_t+\Delta)(u^p).\end{equation}
Combining this lemma with Stein's argument, we can prove Theorem~\ref{thm:main thm1}.

\begin{proof}[\bf Proof of Theorem~\ref{thm:main thm1}]
Since \eqref{eq:mainthm 1} is equivalent to \eqref{eq:mainthm 2}, see Lemma \ref{GpMIp}, we will prove \eqref{eq:mainthm 2}. By Lemma \ref{lem:key Gamma}, it suffices to show that \begin{equation}\label{eq:equ2}
\norm{\sqrt{\Gamma_p(f)}}_{\ell^p(V,\nu)}^2 \leq C \|f\|_{\ell^p(V,\nu)}\|\Delta f\|_{\ell^p(V,\nu)},\quad \mathrm{for}\ \ 0\leq f\in {\mathcal C}_0(V).
\end{equation}
In general, for any $f\in {\mathcal C}_0(V)$, write $f=f_+-f_-$ with $f_+=\max\{f,0\}$ and $f_-=\max\{-f,0\}$. Then $|Df|\le |Df_+|+|Df_-|$ and consequently $\||Df|\|_{\ell_p(E,\nu)} \le \||Df_+|\|_{\ell_p(E,\nu)}+\||Df_-|\|_{\ell_p(E,\nu)}$, where $f_+,f_-\in {\mathcal C}_0(V)$ are non-negative.

The argument below is essentially due to \cite[Theorem~1.3]{Dungey08} modified from the continuous setting.
For any $t>0, x\in V,$ set $u(t,x):=e^{-t\Delta}f(x)$ and $$J(t,x):=-(\partial_t+\Delta)(u^p(t,x)).$$ By
\eqref{eq:Gamma equ} and H\"older's inequality, we have
\begin{eqnarray}
\norm{\sqrt{\Gamma_p(u)}}_{\ell^p(V,\nu)}^p
&=&
\sum_{x} u(t,x)^{\frac{p(2-p)}{2}}J(t,x)^{\frac{p}{2}}\nu_x\nonumber\\
&\leq&
\left(\sum_{x}u(t,x)^p\nu_x\right)^{\frac{2-p}{2}}\left(\sum_{x}J(t,x)\nu_x\right)^{\frac{p}{2}}.\label{eq:equ1}
\end{eqnarray}
Since $u(t,\cdot)\in \ell^p(V,\nu)$ for any $t>0,$ Lemma \ref{lem:laplace zero} yields
$$
\sum_x\Delta (u^p)(t,x)\nu_x=0.$$
Hence
\begin{eqnarray*}
  \sum_{x}J(t,x)\nu_x
  &=&
  -\sum_{x}\partial_t(u^p(t,x))\nu_x=-p\sum_x u^{p-1}\partial_t u\nu_x\\
  &\leq&
  p\|u\|_{\ell^p(V,\nu)}^{p-1}\|\partial_tu\|_{\ell^p(V,\nu)}.
\end{eqnarray*}
Combining this with \eqref{eq:equ1}, we have
\begin{equation}\label{eq:equ3}
\norm{\sqrt{\Gamma_p(u)}}_{\ell^p(V,\nu)}^2 \leq C \|u\|_{\ell^p(V,\nu)}\|\Delta u\|_{\ell^p(V,\nu)}.
\end{equation}
Since $e^{-t\Delta}$ extends to a semigroup on $\ell^p(V,\nu)$ for $p\in (1,\infty),$
\begin{eqnarray*}
&&\lim_{t\to0^+}\|e^{-t\Delta}f\|_{\ell^p(V,\nu)}=\|f\|_{\ell^p},\\
&&\lim_{t\to0^+}\|\Delta e^{-t\Delta}f\|_{\ell^p(V,\nu)}=\lim_{t\to0^+}\|e^{-t\Delta}\Delta f\|_{\ell^p(V,\nu)}=\|\Delta f\|_{\ell^p},
\end{eqnarray*}
where we used the fact that $f\in {\mathcal C}_0(V)$ lies in the domain of the generator $\Delta$. Note that $\displaystyle\lim_{t\to0+}u(t,x)=f(x), \forall x\in V,$ Fatou's lemma yields
$$
\norm{\sqrt{\Gamma_p(f)}}_{\ell^p(V,\nu)}^2\leq \liminf_{t\to0^+}\norm{\sqrt{\Gamma_p(u)}}_{\ell^p(V,\nu)}^2.
$$
Hence, passing to the limit $t\to 0^+$ in \eqref{eq:equ3}, we get \eqref{eq:equ2}, and prove the theorem.
\end{proof}

%%%%%%%%%%%%%%%%%%%%%%%%%%%%%%%%%%%%%%%%%%%%%%%%%%%%%%%
%%%%%%%%%%%%%%%%%%%%%%%%%%%%%%%%%%%%%%%%%%%%%%%%%%%%%%%

%\section{Littleword-Paley-Stein functions on graphs}

%For any function $f:V\to \R,$ $$\|f\|_{\ell^p(V,\nu)}:=\left(\sum_{x\in V}|f|^p(x)\nu_x\right)^{1/p}\ (1\leq p<\infty), \ \ \|f\|_{\ell^\infty(V,\nu)}:=\sup_{x\in V}|f(x)|,$$
%\begin{eqnarray*}
%  \ell^p(V,\nu):=\{f:V\to \R|\ \|f\|_{\ell^p(V,\nu)}<\infty\}.
%\end{eqnarray*} The $\ell^p$ spaces on $(E,\mu),\ell^p(E,\mu),$ are defined similarly.

%We assume that the weight $\mu$ is symmetric and compatible with the edge structure $E,$ i.e. $\mu_{xy}>0$ if $(x,y)\in E$ and $\mu_{xy}=0$ otherwise.

% In short notation, we always denote by $x\sim y$ if $(x,y)\in E.$

%For convenience, we introduce an orientation on $E$ and denote by $\overrightarrow{E}$ the set of oriented edges, i.e.
% $(x,y)\in \overrightarrow{E}$ if there is an oriented edge from $x$ to $y.$ This defines a linear operator which mimics the gradient of a function:
%$$\nabla: \R^V\to\R^{\overrightarrow{E}},\quad \R^V\ni f\mapsto \nabla f,$$ where $\nabla f(x,y)=f(y)-f(x)$ for any
% $(x,y)\in \overrightarrow{E}.$ By taking the absolute value, $|\nabla f|$ defines a function on $E,$ i.e. $|\nabla f|(\{x,y\}):=|\nabla f(x,y)|$ for any $\{x,y\}\in E.$ The orientation of edges are only for the convenience of reasoning. It will be irrelevant to the results of the paper.

\section{Littlewood-Paley-Stein functions}\label{LPS}
To prove the boundedness of Littlewood-Paley-Stein functions, we need the following lemma from semigroup theory, see \cite[Section~III.3]{Stein70}. Given $f\in \ell^1(V,\nu)+\ell^\infty(V,\nu),$ the semigroup maximal function of $f$ is defined as
$$f^*(x):=\sup_{t>0}|(e^{-t\Delta}f)(x)|,\quad x\in V.$$
\begin{lemma}\label{lem:semigroup lem}
  For any $p\in(1,\infty],$
  $$\|f^*\|_{\ell^p(V,\nu)}\leq C_p\|f\|_{\ell^p(V,\nu)},\quad \forall f\in \ell^p(V,\nu).$$
\end{lemma}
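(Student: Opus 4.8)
The plan is to recognize the claimed bound as an instance of Stein's maximal theorem for symmetric Markov (diffusion) semigroups, and to reduce to it by checking that $\{e^{-t\Delta}\}_{t>0}$ meets all of its hypotheses. Stein's theorem asserts precisely $\|f^*\|_{\ell^p}\le C_p\|f\|_{\ell^p}$ for $1<p\le\infty$ whenever $\{T_t\}$ is a semigroup that is (i) self-adjoint on $\ell^2(V,\nu)$, (ii) a contraction on $\ell^p(V,\nu)$ for every $p\in[1,\infty]$, (iii) positivity preserving, and (iv) conservative, i.e. $T_t 1 = 1$. So the entire proof amounts to verifying (i)--(iv) for $T_t = e^{-t\Delta}$ and then quoting \cite[Section III.3]{Stein70}.

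Properties (i)--(iii) are built into the construction of the semigroup from the Dirichlet form $Q$. Self-adjointness is exactly the statement, recalled in the setting section, that $\Delta=\Delta_{\nu,\mu}$ is a nonnegative self-adjoint operator on $\ell^2(V,\nu)$. Because $Q$ is a regular Dirichlet form, the Beurling--Deny criteria guarantee that $e^{-t\Delta}$ is positivity preserving and sub-Markovian, i.e. $0\le f\le 1$ implies $0\le e^{-t\Delta}f\le 1$; this is the content of the Keller--Lenz framework \cite{KellerLenz12}. Sub-Markovianity gives the $\ell^\infty$ contraction directly (since $|e^{-t\Delta}f|\le e^{-t\Delta}|f|\le\|f\|_\infty$), self-adjointness gives the $\ell^1$ contraction by duality, and the Riesz--Thorin theorem interpolates to yield (ii) on the full scale $p\in[1,\infty]$.

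The only property that genuinely uses the standing hypothesis \eqref{e:bounded Laplacian} is conservativeness (iv), and here boundedness does the work immediately: from \eqref{def:laplacian}, $\Delta$ annihilates constants, $(\Delta 1)(x)=\nu_x^{-1}\sum_y\mu_{xy}(1-1)=0$, and since $\Delta$ is a bounded operator one has the norm-convergent expansion $e^{-t\Delta}1=\sum_{k\ge0}\frac{(-t)^k}{k!}\Delta^k 1 = 1$. Equivalently, a graph with a bounded Laplacian is stochastically complete, so no mass escapes. This is precisely the point where the argument would break down for unbounded Laplacians, where stochastic completeness can fail and one would have to compactify or appeal to a sub-Markovian version of the maximal theorem; I expect establishing (iv) to be the only nontrivial verification, and it is exactly the standing assumption that trivializes it. With (i)--(iv) in hand, \cite[Section III.3]{Stein70} yields the assertion, the case $p=\infty$ being in any event immediate from $\|e^{-t\Delta}f\|_{\ell^\infty}\le\|f\|_{\ell^\infty}$.

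For completeness I note the shape of Stein's argument, which is where the real work resides if one prefers not to quote the abstract theorem. One splits $e^{-t\Delta}f = A_t f - \frac1t\int_0^t s\,\Delta e^{-s\Delta}f\,ds$, where $A_t f=\frac1t\int_0^t e^{-s\Delta}f\,ds$ is the Ces\`aro average. The maximal function of the averages $A_t$ is bounded on $\ell^p$, $1<p\le\infty$, by the Hopf--Dunford--Schwartz ergodic maximal theorem, using only $\ell^1$--$\ell^\infty$ contractivity. By Cauchy--Schwarz the remainder is dominated, uniformly in $t$, by the Littlewood--Paley $g$-function $\big(\int_0^\infty s\,|\Delta e^{-s\Delta}f|^2\,ds\big)^{1/2}$, whose $\ell^p$-boundedness rests on the analyticity of the semigroup and on self-adjointness. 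This $g$-function estimate is the main obstacle in the hands-on approach; since analyticity and the corresponding square-function bounds are exactly what Stein establishes, invoking his theorem after the verification of (i)--(iv) is the cleanest route.
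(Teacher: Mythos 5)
Your proposal is correct and takes essentially the same route as the paper: the paper gives no proof at all, simply quoting \cite[Section~III.3]{Stein70}, i.e.\ the maximal theorem for symmetric diffusion semigroups, which is exactly the theorem you invoke. Your verification of Stein's four axioms (self-adjointness, $\ell^p$-contractivity via the sub-Markov property and duality, positivity, and conservativeness $e^{-t\Delta}1=1$ obtained from the boundedness of $\Delta$) merely fills in what the paper leaves implicit, and it is accurate.
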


Using this lemma, one can prove the following.
\begin{lemma}\label{lem:derivative}
  Let $p\in(1,\infty).$ For any $0\leq f\in {\mathcal C}_0(V)$ and $t>0,$
\begin{eqnarray}\label{eq:equ4}
\partial_t\sum_{x\in V}(e^{-t\Delta}f(x))^p\nu_x=\sum_{x\in V}\partial_t((e^{-t\Delta}f(x))^{p})\nu_x.
\end{eqnarray}
\end{lemma}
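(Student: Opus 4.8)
The plan is to justify the interchange of the $t$-derivative with the (possibly infinite) sum over $V$ by producing a single, $t$-independent dominating function in $\ell^1(V,\nu)$, after which the classical theorem on differentiation under the summation sign applies. Throughout I write $u(t,x)=e^{-t\Delta}f(x)$; since $f\geq 0$ and the semigroup is positivity preserving, $u\geq 0$, and since $\Delta$ is bounded on $\ell^p(V,\nu)$ (Lemma~\ref{l:bounded operator}) the map $t\mapsto u(t,x)$ is smooth for each fixed $x$ with $\partial_t u=-\Delta u$.

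First I would record the pointwise chain rule
\[
\partial_t\bigl(u(t,x)^p\bigr)=p\,u(t,x)^{p-1}\partial_t u(t,x)=-p\,u(t,x)^{p-1}\Delta u(t,x),
\]
valid for every $x\in V$ and $t>0$. This needs a brief remark at points where $u(t,x)=0$: there $u$ attains a minimum in $t$, so $\partial_t u=0$, and both sides vanish (consistently with $0^{p-1}=0$ for $p>1$), so no difficulty arises for $1<p<2$. The core of the proof is then a bound uniform in $t$. Using $u(t,x)\leq f^*(x)=\sup_{s>0}e^{-s\Delta}f(x)$ and $\abs{\Delta u(t,x)}=\abs{e^{-t\Delta}(\Delta f)(x)}\leq (\Delta f)^*(x)$, I obtain for all $t>0$
\[
\left|\partial_t\bigl(u(t,x)^p\bigr)\right|\leq p\,\bigl(f^*(x)\bigr)^{p-1}(\Delta f)^*(x)=:h(x).
\]

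To see that $h\in\ell^1(V,\nu)$, note $f\in\ell^p(V,\nu)$ trivially and $\Delta f\in\ell^p(V,\nu)$ by \eqref{eq:bound2}, so the maximal function estimate Lemma~\ref{lem:semigroup lem} gives $f^*,(\Delta f)^*\in\ell^p(V,\nu)$. Hölder's inequality with exponents $p/(p-1)$ and $p$ then yields
\[
\sum_{x\in V}h(x)\nu_x\leq p\,\|f^*\|_{\ell^p(V,\nu)}^{p-1}\,\|(\Delta f)^*\|_{\ell^p(V,\nu)}\leq C_p\,\|f\|_{\ell^p(V,\nu)}^{p-1}\,\|\Delta f\|_{\ell^p(V,\nu)}<\infty.
\]
Since the bound $h$ does not depend on $t$, the termwise differentiated series $\sum_{x}\partial_t(u(t,x)^p)\nu_x$ converges uniformly in $t$ by the Weierstrass $M$-test against $h(x)\nu_x$, while $\sum_x u(t,x)^p\nu_x$ converges pointwise because $u(t,\cdot)\in\ell^p(V,\nu)$. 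The standard theorem (uniform convergence of derivatives plus pointwise convergence of the series) then gives $\partial_t\sum_x u^p\nu_x=\sum_x\partial_t(u^p)\nu_x$, which is exactly \eqref{eq:equ4}. The one genuinely substantive step is the construction of the $t$-uniform integrable majorant $h$; this is precisely where Lemma~\ref{lem:semigroup lem} is needed, and everything else is routine verification.
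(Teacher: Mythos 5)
Your proof is correct and follows essentially the same route as the paper's: the same pointwise identity $\partial_t(u^p)=-pu^{p-1}\Delta u$, the same $t$-uniform majorant $(f^*)^{p-1}(\Delta f)^*$ controlled via Lemma~\ref{lem:semigroup lem} and H\"older's inequality, and then a standard interchange theorem (your uniform-convergence formulation is interchangeable with the paper's dominated-convergence formulation). Your extra remark handling the chain rule at points where $u(t,x)=0$ for $1<p<2$ is a small refinement the paper leaves implicit.
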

\begin{proof}
A justification is needed since the summation here involves infinitely many terms. Set $u(t,x)=e^{-t\Delta}f(x).$ Then
 \begin{eqnarray*}
|\partial_t u^p(t,x)| &=& |(u(t,x))^{p-1}\Delta u(t,x)|
\\ &\leq& p\sum_{x\in V}\sup_{t>0}\{(u(t,x))^{p-1}|\Delta u(t,x)|\}\nu_x
\\ &:=&p\sum_{x\in V}g(x)\nu_x.
 \end{eqnarray*}
We claim that $g\in \ell^1(V,\nu).$ Indeed, note that
    \begin{eqnarray*}
      g(x)&=&\sup_{t>0}\{(e^{-t\Delta}f(x))^{p-1}|e^{-t\Delta}(\Delta f)(x)|\}\\
      &\leq&\sup_{t>0}\{(e^{-t\Delta}f(x))^{p-1}e^{-t\Delta}(|\Delta f|)(x)\}\\
      &\leq& (f^*(x))^{p-1} (|\Delta f|)^*(x).
    \end{eqnarray*}
 By Lemma \ref{lem:semigroup lem}, $f^*\in \ell^p(V,\nu)$ and $|\Delta f|\in \ell^p(V,\nu).$ Hence H\"older's inequality yields that $g\in \ell^1(V,\nu).$

 Now the differentiability theorem, derived from the dominated convergence theorem, yields \eqref{eq:equ4}.
\end{proof}

%\begin{proof}
%  Since the summation here involves infinitely many terms, we need to justify its derivative. Set $u(t,x)=e^{-t\Delta}f(x).$ Taking the formal derivative in time, \begin{eqnarray}
%    \frac{d}{dt}\sum_{x\in V}\nu_x(u(t,x))^p&=&\sum_{x\in V}\nu_x\partial_t(u(t,x)^{p})=p\sum_{x\in V}\nu_xu^{p-1}\partial_tu\label{eq:equ4}\\
%    &=&-p\sum_{x\in V}\nu_x(u(t,x))^{p-1}\Delta u(t,x).\nonumber
%  \end{eqnarray} Consider the summation with the summands replaced by their absolute values in the last equality,
%    \begin{eqnarray*}
%      p\sum_{x\in V}\nu_x|(u(t,x))^{p-1}\Delta u(t,x)|&\leq& p\sum_{x\in V}\nu_x\sup_{t>0}\{(u(t,x))^{p-1}|\Delta u(t,x)|\}\\
%      &=:&p\sum_{x\in V}\nu_xg(x).
%    \end{eqnarray*}
%    For the differentiability, it suffices to show that $g\in \ell^1(V,\nu).$ Using the semigroup maximal functions,
%    \begin{eqnarray*}
%      g(x)&=&\sup_{t>0}\{(e^{-t\Delta}f(x))^{p-1}|e^{-t\Delta}(\Delta f)(x)|\}\\
%      &\leq&\sup_{t>0}\{(e^{-t\Delta}f(x))^{p-1}e^{-t\Delta}(|\Delta f|)(x)\}\\
%      &\leq& (f^*(x))^{p-1} (|\Delta f|)^*(x).
%    \end{eqnarray*} By Lemma \ref{lem:semigroup lem}, $f^*\in \ell^p(V,\nu)$ and $|\Delta f|\in \ell^p(V,\nu).$ Hence H\"older's inequality yields that $g\in \ell^1(V,\nu).$ By the differentiability theorem, derived from the dominated convergence theorem, one proves that the formal derivative in \eqref{eq:equ4} is in fact the derivative.
%    This proves the lemma.
%\end{proof}

Let $p\in [1,\infty)$ and $T:{\mathcal C}_0(V)\to {\mathcal C}_0(V)$ be a linear map. We denote by $\|T\|_{p\rightarrow p}:=\sup\{\|Tf\|_{\ell^p(V,\nu)}: \|f\|_{\ell^p(V,\nu)}=1, f\in {\mathcal C}_0(V)\}$ the operator norm of $T$ from $\ell^p(V,\nu)$ to $\ell^p(V,\nu).$ The following lemma is standard, see e.g. \cite[Proof~of~Theorem~1.3]{CoulhonDuongLi03}.
\begin{lemma}\label{lem:operator norm}
  Let $(V,E,\nu,\mu)$ be a weighted graph with positive spectral gap, i.e. $\lambda_1=\inf\sigma(\Delta)>0.$ Then
  $$
  \|e^{-t\Delta}\|_{p\rightarrow p}\leq e^{-2\lambda_1(p-1)t}.
  $$
\end{lemma}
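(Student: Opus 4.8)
The plan is to deduce the estimate from a differential inequality for the \emph{norm} $t\mapsto\norm{e^{-t\Delta}f}_{\ell^p(V,\nu)}$ itself, rather than for its $p$-th power, driven by an $\ell^p$-analogue of the spectral gap; running the inequality on the norm directly is what produces an operator-norm bound instead of merely a bound on $\sum_x\abs{u}^p\nu_x$.

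First I would reduce to nonnegative data. Since $\norm{\cdot}_{p\to p}$ is a supremum over $f\in\mathcal{C}_0(V)$ and $e^{-t\Delta}$ is positivity preserving with $\abs{e^{-t\Delta}f}\le e^{-t\Delta}\abs{f}$ pointwise (the kernel is nonnegative), one has $\norm{e^{-t\Delta}f}_{\ell^p(V,\nu)}\le\norm{e^{-t\Delta}\abs{f}}_{\ell^p(V,\nu)}$ with $\norm{\abs{f}}_{\ell^p(V,\nu)}=\norm{f}_{\ell^p(V,\nu)}$, so it suffices to treat $0\le f\in\mathcal{C}_0(V)$. Write $u(t,\cdot)=e^{-t\Delta}f\ge0$ and set $\psi(t):=\norm{u(t,\cdot)}_{\ell^p(V,\nu)}$.

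Next I would differentiate. By Lemma~\ref{lem:derivative} one may differentiate $\psi^p=\sum_x u^p\nu_x$ term by term, and since $\partial_t u=-\Delta u$ and $u(t,\cdot)\in\ell^1(V,\nu)\cap\ell^\infty(V,\nu)$, a symmetrization using $\Delta u(x)=\frac{1}{\nu_x}\sum_y\mu_{xy}(u(x)-u(y))$ (all sums absolutely convergent, cf. Lemma~\ref{lem:laplace zero}) gives
\[
\frac{d}{dt}\psi^p=-p\sum_{x}u^{p-1}\Delta u\,\nu_x=-\frac{p}{2}\sum_{x,y}\mu_{xy}\bigl(u^{p-1}(x)-u^{p-1}(y)\bigr)\bigl(u(x)-u(y)\bigr)=-p\,Q(u,u^{p-1}),
\]
so that $\psi'(t)=-\psi^{1-p}\,Q(u,u^{p-1})$.

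The crux is an $\ell^p$-Poincaré inequality $Q(u,u^{p-1})\ge\kappa_p\lambda_1\psi^p$. I would obtain it from the pointwise elementary inequality
\[
\bigl(a^{p-1}-b^{p-1}\bigr)(a-b)\ge\kappa_p\bigl(a^{p/2}-b^{p/2}\bigr)^2,\qquad a,b\ge0,
\]
which upon multiplying by $\mu_{xy}$ and summing yields $Q(u,u^{p-1})\ge\kappa_p\,Q(u^{p/2})$, combined with the $\ell^2$ spectral gap $Q(g)\ge\lambda_1\norm{g}_{\ell^2(V,\nu)}^2$ applied to $g=u^{p/2}$ (here $\norm{u^{p/2}}_{\ell^2(V,\nu)}^2=\psi^p$, and $u^{p/2}\in\mathrm{dom}(Q)=\ell^2(V,\nu)$ since \eqref{e:bounded Laplacian} makes $Q$ bounded). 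Substituting into $\psi'=-\psi^{1-p}Q(u,u^{p-1})$ gives $\psi'\le-\kappa_p\lambda_1\psi$, whence $\psi(t)\le e^{-\kappa_p\lambda_1 t}\psi(0)$ and, taking the supremum over $\norm{f}_{\ell^p(V,\nu)}=1$, the operator-norm bound $\norm{e^{-t\Delta}}_{p\to p}\le e^{-\kappa_p\lambda_1 t}$. Matching the statement amounts to the claim $\kappa_p=2(p-1)$, and the main obstacle is precisely to establish the elementary inequality with this optimal constant, since a lossy constant degrades the exponent; as a sanity check one may note that a crude version of the decay also follows by Riesz--Thorin interpolation between the contractive endpoint and the sharp $\ell^2$ bound $\norm{e^{-t\Delta}}_{2\to2}\le e^{-\lambda_1 t}$, which confirms exponential decay of the operator norm but is not self-dual under $p\mapsto p'$ and hence not optimal.
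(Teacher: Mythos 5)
Your proof as written has a genuine gap, and it sits exactly at the point you flag as ``the main obstacle'': the elementary inequality $(a^{p-1}-b^{p-1})(a-b)\ge\kappa_p\,(a^{p/2}-b^{p/2})^2$ does \emph{not} hold with $\kappa_p=2(p-1)$. Letting $a\to b>0$, the left side is $\sim(p-1)b^{p-2}(a-b)^2$ while the right side is $\sim\kappa_p\tfrac{p^2}{4}b^{p-2}(a-b)^2$, forcing $\kappa_p\le\tfrac{4(p-1)}{p^2}$ (and taking $b=0$ forces $\kappa_p\le1$); the optimal constant is the Stroock--Varopoulos constant $\tfrac{4(p-1)}{p^2}$, which is strictly smaller than $2(p-1)$ for $p>\sqrt2$. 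Moreover, no argument can close this gap, because the lemma as literally printed is false for $p\in(3/2,2]$: the spectral theorem gives $\|e^{-t\Delta}\|_{2\to2}=e^{-\lambda_1 t}$ \emph{exactly}, and since $e^{-t\Delta}$ is self-adjoint one has $\|e^{-t\Delta}\|_{p\to p}=\|e^{-t\Delta}\|_{p'\to p'}$, whence by interpolation $\|e^{-t\Delta}\|_{2\to2}\le\|e^{-t\Delta}\|_{p\to p}^{1/2}\|e^{-t\Delta}\|_{p'\to p'}^{1/2}=\|e^{-t\Delta}\|_{p\to p}$ for every $p\in(1,\infty)$; the lower bound $e^{-\lambda_1 t}$ is incompatible with $e^{-2\lambda_1(p-1)t}$ as soon as $2(p-1)>1$. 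The statement carries a typo: what is meant, and what is actually invoked in the proof of Theorem \ref{thm:main thm2} (where the bound is applied to $\|e^{-t\Delta}f\|_{\ell^p(V,\nu)}^p$), is the $p$-th power form $\|e^{-t\Delta}\|_{p\to p}^p\le e^{-2\lambda_1(p-1)t}$, i.e. $\|e^{-t\Delta}\|_{p\to p}\le e^{-2\lambda_1(p-1)t/p}$.

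For that corrected statement, the paper's proof is precisely the one-line argument you relegated to a final sanity check: Riesz--Thorin interpolation between $\|e^{-t\Delta}\|_{1\to1}\le1$ and $\|e^{-t\Delta}\|_{2\to2}=e^{-\lambda_1 t}$ gives exactly $\|e^{-t\Delta}\|_{p\to p}\le e^{-2\lambda_1(p-1)t/p}$ for $p\in(1,2]$. Your differential-inequality scheme --- reduction to $f\ge0$, term-by-term differentiation via Lemma \ref{lem:derivative}, symmetrization to $Q(u,u^{p-1})$, and the $\ell^2$ spectral gap applied to $u^{p/2}$ --- is otherwise sound, and if you run it with the correct constant $\kappa_p=\tfrac{4(p-1)}{p^2}$ it yields $\|e^{-t\Delta}\|_{p\to p}\le e^{-4(p-1)\lambda_1 t/p^2}$. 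Since $\tfrac{4(p-1)}{p^2}\ge\tfrac{2(p-1)}{p}$ precisely when $p\le2$, this implies the corrected statement and is in fact strictly sharper than the interpolation bound for $1<p<2$. So your route is viable, genuinely different from the paper's, and quantitatively better --- but only after replacing the unattainable constant $2(p-1)$ by $\tfrac{4(p-1)}{p^2}$; as submitted, with that constant left as an unproved (and false) claim, the proof is incomplete.
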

\begin{proof}
  Since $\|e^{-t\Delta}\|_{2\rightarrow2}=e^{-\lambda_1t}$ and $\|e^{-t\Delta}\|_{1\rightarrow 1}\leq 1,$ the lemma follows from the Riesz-Thorin interpolation theorem.
\end{proof}

To obtain the Littlewood-Paley-Stein estimate for graphs, we first introduce an intermediate quantity,
 defined by Dungey \cite{Dungey08}. For any $a\in\R,$ $0\leq f\in {\mathcal C}_0(V),$ define
 $$
 (\mathcal{H}_{p,a}f)(x):=\left(\int_0^\infty e^{at}\Gamma_p(e^{-t\Delta}f)(x)\right)^\frac{1}{2} dt,\quad x\in V.
 $$
Recall that
 $$
(\mathcal{H}_af)(\{x,y\}):=\left(\int_0^\infty e^{at} |D(e^{-t\Delta}f)|^2(\{x,y\})dt\right)^\frac{1}{2},\quad \{x,y\}\in E.
$$

The following  lemma compares these two quantities.
\begin{lemma}\label{lem:littlewood key}
   Let $G=(V,E,\nu,\mu)$ be a graph satisfying \eqref{e:bounded Laplacian}, $p\in(1,2]$ and $a\in\R$.
   For any $0\leq f\in {\mathcal C}_0(V),$ then
$$
\|\HH_af\|_{\ell^p(E,\mu)} \le C\|\HH_{p,a}f\|_{\ell^p(V,\nu)}.
$$
\end{lemma}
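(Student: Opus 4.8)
The plan is to follow the proof of Lemma~\ref{lem:key Gamma} almost verbatim, the only new feature being the weighted time integral $\int_0^\infty e^{at}(\cdots)\,dt$, which I will carry along as a fixed nonnegative weight. Writing $u(t,x)=e^{-t\Delta}f(x)$, I introduce for each ordered pair of neighbours the time-integrated quantity
\[
A_{xy}:=\int_0^\infty e^{at}\gamma_p(u(t,x),u(t,y))\,dt\ge 0,
\]
so that, by the formula \eqref{eq:formula Gamma} for $\Gamma_p$, one has $\int_0^\infty e^{at}\Gamma_p(u(t,\cdot))(x)\,dt=\sum_y\frac{\mu_{xy}}{\nu_x}A_{xy}=:B_x$. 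Since $\gamma_p\ge 0$ for $p\in(1,2]$, all the quantities involved are nonnegative, so every interchange of $\sum$ and $\int$ below is justified by Tonelli's theorem and the inequalities hold in $[0,\infty]$; in particular there is no need to assume any of the integrals finite a priori.

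First I would estimate the edge side. Applying Lemma~\ref{lem:sym} pointwise in $t$ with $\alpha=u(t,x)$, $\beta=u(t,y)$ and integrating against $e^{at}\,dt$ gives
\[
\int_0^\infty e^{at}(u(t,x)-u(t,y))^2\,dt\le\frac{1}{p-1}\bigl(A_{xy}+A_{yx}\bigr).
\]
Raising this to the power $p/2\le 1$, using the elementary subadditivity $(s+s')^{p/2}\le s^{p/2}+(s')^{p/2}$, then summing against $\mu_{xy}$ over unordered edges and symmetrizing (each edge splits into its two ordered pairs, using $\mu_{xy}=\mu_{yx}$), I obtain
\[
\|\HH_af\|_{\ell^p(E,\mu)}^p\le\frac{1}{(p-1)^{p/2}}\sum_{x,y}\mu_{xy}A_{xy}^{p/2}.
\]

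The remaining step reproduces the vertex-side estimate of Lemma~\ref{lem:key Gamma}. Factoring $B_x=\frac{\deg_x}{\nu_x}\sum_y\frac{\mu_{xy}}{\deg_x}A_{xy}$ and using that $y\mapsto\frac{\mu_{xy}}{\deg_x}$ is a probability measure together with the concavity of $s\mapsto s^{p/2}$ for $p\le 2$ (Jensen's inequality), I get
\[
\nu_xB_x^{p/2}\ge\Bigl(\tfrac{\deg_x}{\nu_x}\Bigr)^{\frac{p}{2}-1}\sum_y\mu_{xy}A_{xy}^{p/2}\ge M^{\frac{p}{2}-1}\sum_y\mu_{xy}A_{xy}^{p/2},
\]
where the last inequality uses \eqref{e:bounded Laplacian} together with $\frac{p}{2}-1\le 0$. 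Summing over $x$ identifies the left-hand side as $\|\HH_{p,a}f\|_{\ell^p(V,\nu)}^p$, and combining with the edge-side bound yields $\|\HH_af\|_{\ell^p(E,\mu)}^p\le\frac{M^{1-p/2}}{(p-1)^{p/2}}\|\HH_{p,a}f\|_{\ell^p(V,\nu)}^p$, which is the claim after taking $p$-th roots.

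The computations are routine; the only point demanding care is the bookkeeping around the exponent $p/2$. Concavity is used in one direction to pass $(\cdot)^{p/2}$ inside the normalized neighbour-average on the vertex side, whereas subadditivity is used in the opposite direction to split $(A_{xy}+A_{yx})^{p/2}$ on the edge side; both rely on $p\le 2$, and that same constraint makes $\frac{p}{2}-1\le 0$ so that the uniform bound $M$ from \eqref{e:bounded Laplacian} enters with the correct (favourable) sign. I expect the main obstacle to be merely ensuring that the pointwise Lemma~\ref{lem:sym} survives integration against $e^{at}\,dt$ and that the time integral can be treated as a fixed nonnegative weight throughout, but since every integrand is nonnegative this reduces to monotonicity of the integral and Tonelli's theorem.
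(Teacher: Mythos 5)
Your proposal is correct and takes essentially the same approach as the paper's proof: Jensen/H\"older with \eqref{e:bounded Laplacian} to pass from the vertex-weighted average to the edge sum, symmetrization over ordered pairs, subadditivity (resp.\ concavity) of $s\mapsto s^{p/2}$, and Lemma~\ref{lem:sym} integrated against $e^{at}\,dt$. The only differences are cosmetic: you run the chain of inequalities from $\|\HH_af\|_{\ell^p(E,\mu)}$ upward instead of from $\|\HH_{p,a}f\|_{\ell^p(V,\nu)}$ downward, and you make the Tonelli justifications explicit where the paper leaves them implicit.
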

\begin{proof} Set $u(t,x)=e^{-t\Delta}f(x).$ Then by \eqref{e:bounded Laplacian}, H\"older's inequality and Lemma \ref{lem:sym}, we have
\begin{eqnarray*}
&&\|\HH_{p,a}f\|_{\ell^p(V,\nu)}^p\\&=&\sum_{x}\nu_x\left(\sum_{y}\frac{\mu_{xy}}{\nu_x}\int_0^\infty e^{at}\gamma_p(u(t,x),u(t,y))dt\right)^{\frac{p}{2}}
\\ &\geq&
C\sum_{x,y}\mu_{xy}\left(\int_0^\infty e^{at}\gamma_p(u(t,x),u(t,y))dt\right)^{\frac{p}{2}}
\\ &=&
\frac{C}{2}\sum_{x,y}\mu_{xy}\left\{\left(\int_0^\infty e^{at}\gamma_p(u(t,x),u(t,y))dt\right)^{\frac{p}{2}}+\left(\int_0^\infty e^{at}\gamma_p(u(t,y),u(t,x))dt\right)^{\frac{p}{2}}\right\}
\\ &\geq&
C\sum_{x,y}\mu_{xy}\left(\int_0^\infty e^{at}[\gamma_p(u(t,x),u(t,y))+\gamma_p(u(t,y),u(t,x))]dt\right)^{\frac{p}{2}}
\\ &\geq&
C\|\HH_af\|_{\ell^p(E,\mu)}^p.
\end{eqnarray*}
This proves the lemma.
\end{proof}

We are now ready to prove Theorem \ref{thm:main thm2}.
\begin{proof}[\bf Proof of Theorem \ref{thm:main thm2}] We only prove the second assertion since the first one follows from the same argument.

It is enough to prove \eqref{eq:alpha pleq} for $0\leq f\in {\mathcal C}_0(V)$. Indeed, for any $f\in {\mathcal C}_0(V)$, write $f=f_+-f_-$ with $f_+=\max\{f,0\}$ and $f_-=\max\{-f,0\}$. Then it follows that $(\mathcal{H}_af)(\{x,y\})\le (\mathcal{H}_af_+)(\{x,y\})+(\mathcal{H}_af_-)(\{x,y\})$ for any $\{x,y\}\in E.$

By Lemma \ref{lem:littlewood key}, it suffices to show that for any $t>0,$
  \begin{equation}\label{eq:equ5}\|\mathcal{H}_{p,a}f\|_{\ell^p(V,\nu)}\leq C\|f\|_{\ell^p(V,\nu)},\quad \forall\ 0\leq f\in {\mathcal C}_0(V).\end{equation} The following is a discrete version of Stein's argument as in \cite[Theorem~1.3]{CoulhonDuongLi03} and \cite[Theorem~1.3]{Dungey08}.

  Set $u(t,x)=e^{-t\Delta}f(x).$
  By \eqref{eq:Gamma equ}, we set $$J_a(x)=-\int_0^\infty e^{at}(\partial_t+\Delta)(u^p)dt\geq0.$$ Write
 $$
 (\HH_{p,a}f)^2(x)\leq C\sup_{t>0}u^{2-p}(t,x)J_a(x)=Cf^*(x)^{2-p}J_a(x).
 $$
  By H\"older inequality and Lemma \ref{lem:semigroup lem},
\begin{equation}\label{eq:equ6}
\|\mathcal{H}_{p,a}f\|^p_{\ell^p(V,\nu)}\leq C\|f\|_{\ell^p(V,\nu)}^{\frac{p(2-p)}{2}}\|J_a\|_{\ell^1(V,\nu)}^{\frac{p}{2}}.\end{equation}
By Lemma \ref{lem:laplace zero} and Lemma \ref{lem:derivative}
\begin{eqnarray*}
\|J_a\|_{\ell^1(V,\nu)}&=&-\sum_{x}\nu_x\int_0^\infty e^{at}(\partial_t+\Delta)(u^p)(t,x)dt\\
  &=&-\int_0^\infty e^{at}\sum_{x}\nu_x(\partial_t+\Delta)(u^p)(t,x)dt\\
  &=&-\int_0^\infty e^{at}\sum_{x}\nu_x\partial_t(u^p)(t,x)dt\\
  &=&-\int_0^\infty e^{at}\partial_t\sum_{x}\nu_xu^p(t,x)dt.
  \end{eqnarray*}
Hence by  integration by parts, Lemma \ref{lem:operator norm} and $a<2(p-1)\lambda_1$,
\begin{eqnarray*}
\|J_a\|_{\ell^1(V,\nu)}&=&-\int_0^\infty \partial_t\Big(e^{at}\sum_{x}\nu_xu^p(t,x)\Big)dt+a\int_0^\infty e^{at}\sum_{x}\nu_xu^pdt\\
&\leq&\sum_{x}\nu_xu^p(0,x)+a\int_0^\infty e^{at}\|e^{-t\Delta}f\|_{\ell^p(V,\nu)}^pdt\\
&\leq& \|f\|_{\ell^p(V,\nu)}^p\br{1+a\int_0^\infty e^{at}e^{-2\lambda_1(p-1)t}dt}\leq C\|f\|_{\ell^p(V,\nu)}^p.
\end{eqnarray*}
Combining this with \eqref{eq:equ6} yields \eqref{eq:equ5}  hence the theorem.
\end{proof}

\bigskip

\section{Bounded Laplacians and positive spectral gap}\label{s:Riesz transform}

We first recall a result of \cite{AuscherCoulhonDuongHofmann04}: a positive spectral gap implies the $L^p$ boundedness, $1<p<\infty$, of Riesz transforms for manifolds with local bounded geometry.
More precisely, let $M$ be a complete Riemannian manifold satisfying the following conditions:
\begin{enumerate}[(I)]
  \item $M$ has  exponential volume growth: for any $r_0>0,$ there exist $C_1,C_2$ depending only on $r_0$ such that for any $x\in M,\theta>1, r\leq r_0,$
  $$\vol(B_x(\theta r))\leq C_1e^{C_2\theta}\vol(B_x(r)),$$ where $\vol(B_x(r))$ denotes the Riemannian volume of the geodesic ball of radius $r$ centered at $x$.
  \item There exist constants $C_3,C_4$ such that for any $x,y\in M$ and $t\in(0,1],$
$$p_t(x,x)\leq \frac{C_3}{\vol(B_x(\sqrt t))}$$ and $$|\nabla_x p_t(x,y)|\leq \frac{C_4}{\sqrt{t}\ \vol(B_y(\sqrt t))}.$$
\end{enumerate}
Then the following local equivalence holds: for any $p\in(1,\infty)$
\begin{equation*}\label{e:boundedness}
C^{-1}\br{\|\Delta_M^{1/2}f\|_{L^p}+\|f\|_{L^p}}
\leq \| |\nabla f| \|_{L^p}+\|f\|_{L^p}\leq C\br{\|\Delta_M^{1/2}f\|_{L^p}+\|f\|_{L^p}},\quad \forall f\in {\mathcal C}_0^{\infty}(M).
\end{equation*}
 See  \cite[Theorem~1.7]{AuscherCoulhonDuongHofmann04}.

By  spectral theory, $\Delta_M^{-1/2}$ is a bounded operator on $L^p(M,\vol)$ for any $p\in [1,\infty)$ if $\Delta_M$ has a positive spectral gap, see \cite{CoulhonDuong99} or \cite{JiKunstmannWeber10}.
Combining these results, one figures out that for any $f\in {\mathcal C}_0^{\infty}(M)$
\begin{eqnarray*}
\||\nabla f|\|_{L^p}&\leq& C\br{\|\Delta_M^{1/2}f\|_{L^p}+\|f\|_{L^p}}\\
&=&
C\br{\|\Delta_M^{1/2}f\|_{L^p}+\|\Delta_M^{-1/2}(\Delta_M^{1/2}f)\|_{L^p}}
\leq C\|\Delta_M^{1/2}f\|_{L^p}.
\end{eqnarray*}

Let $(V,E,\deg,\mu)$ be a weighted graph with normalized Laplacian. Then  assumption $(I)$ implies the local doubling condition \eqref{a:local doubling}, which can be seen by choosing $\theta=2,r=1/2.$

%Following If one wants to follow the proof strategy in the Riemannian case, the first step is to check local conditions $(I)$ and $(II)$ on graphs. These local boundedness assumptions lead to the uniform boundedness of the graph, i.e. $\sup_{x\in V}\deg_x<\infty,$ where $\deg_x:=\sharp\{y\in V| y\sim x\}$ is the combinatorial degree of $x,$ since $(I)$ implies the local doubling condition by setting $\theta=2.$ And the local conditions in $(II)$ yield somehow that the measure $\mu$ can be controlled by $\nu.$ These restrict the local graph structure uniformly.

Instead of assuming \eqref{a:local doubling}, we consider bounded Laplacians on graphs: let $G=(V,E,\nu,\mu)$ be a weighted graph satisfying \eqref{e:bounded Laplacian}. For any $\Omega\subset V,$ the boundary of $\Omega$ is defined as $\partial \Omega:=\{e\in E| e=\{x,y\},\ \mathrm{for\ some}\  x\in \Omega, y\in V\setminus \Omega\}.$ The volume of $\Omega$ w.r.t. the measure $\nu$ is defined as $\nu(\Omega):=\sum_{x\in \Omega}\nu_x$ and the volume of $\partial \Omega,$ as $\mu(\partial \Omega):=\sum_{e\in E}\mu_{e}.$
We say that $G=(V,E,\nu,\mu)$ satisfies the linear isoperimetric inequality if
$$\
h:=\inf_{\substack{\Omega\subset V\\ \sharp \Omega<\infty}}\frac{\mu(\partial\Omega)}{\nu(\Omega)}>0,
$$
where $h$ is called the Cheeger constant of the graph $G.$ As is well known, see e.g. Woess \cite[Proposition~4.3]{Woess00}, $G$ satisfies the linear isoperimetric inequality if and only if
the following Sobolev inequality holds
\begin{equation}\label{e:Sobolev}
\|f\|_{\ell^1(V,\nu)}\leq C\||Df|\|_{\ell^1(E,\mu)}, \ \ \ \ \forall f\in {\mathcal C}_0(V).
\end{equation}

%\red{By spectral theory, a graph $G=(V,E,\nu,\mu)$ has a positive spectral gap if and only if
%\begin{equation}\label{e:l2 spectral gap}
%\|f\|_{\ell^2(V,\nu)}\leq C\||Df|\|_{\ell^2(E,\mu)},\ \ \ \forall f\in {\mathcal C}_0(V).
%\end{equation}
%\comment{make explicit the link between $||Df|\|_{\ell^2(E,\mu)}$ and the Dirichlet form}}

By spectral theory, a graph $G=(V,E,\nu,\mu)$ has a positive spectral gap if and only if
\begin{equation}\label{e:l2 spectral gap}
\|f\|_{\ell^2(V,\nu)}\leq C\||Df|\|_{\ell^2(E,\mu)},\ \ \ \forall f\in {\mathcal C}_0(V).
\end{equation} By Cheeger's estimate, it turns out that \eqref{e:Sobolev} and \eqref{e:l2 spectral gap} are equivalent for graphs with bounded Laplacians, see \cite{DodziukKarp88}.
%\comment{I don't think $Q$ is very helpful ; introduce rather the operator and connect with the definition that we shall give around Theorem 1.1}
%\comment{Notice that conditions \eqref{e:l2 spectral gap} and \eqref{e:Sobolev} are equivalent ; at least I hope this is true also in this setting. \blue{Yes. I simply remove the notion $Q$ and add the equivalence of these conditions.}}
\begin{thm}\label{thm:one part}
Let $G=(V,E,\nu,\mu)$ be a graph satisfying \eqref{e:bounded Laplacian}. Suppose that $G$ has a positive spectral gap, then for any $p\in [1,\infty),$
$$
\||Df|\|_{\ell^p(E,\mu)}\approx \|f\|_{\ell^p(V,\nu)},\ \ \forall f\in {\mathcal C}_0(V).
$$
\end{thm}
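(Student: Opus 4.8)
The plan is to prove the two inequalities comprising the equivalence separately. The upper bound $\||Df|\|_{\ell^p(E,\mu)} \le C\|f\|_{\ell^p(V,\nu)}$ is already contained in Lemma~\ref{l:bounded operator} (inequality~\eqref{eq:bound1}) and holds for every $p\in[1,\infty]$ under \eqref{e:bounded Laplacian} alone, with no spectral gap required. Hence the entire content of the theorem is the reverse, Sobolev-type inequality $\|f\|_{\ell^p(V,\nu)}\le C\||Df|\|_{\ell^p(E,\mu)}$, and this is where the positive spectral gap is used.

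First I would settle the base case $p=1$. By the discussion preceding the theorem, a positive spectral gap is equivalent to the $\ell^2$ estimate \eqref{e:l2 spectral gap}, which by Cheeger's estimate for bounded Laplacians (\cite{DodziukKarp88}) is equivalent to the linear isoperimetric inequality, i.e. to the $\ell^1$ Sobolev inequality \eqref{e:Sobolev}. Thus $\|f\|_{\ell^1(V,\nu)}\le C\||Df|\|_{\ell^1(E,\mu)}$ holds for all $f\in {\mathcal C}_0(V)$, which is precisely the case $p=1$.

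For $p\in(1,\infty)$ I would bootstrap from $p=1$ by applying \eqref{e:Sobolev} to $g=|f|^p$. Since $\|g\|_{\ell^1(V,\nu)}=\|f\|_{\ell^p(V,\nu)}^p$, it suffices to control $\||Dg|\|_{\ell^1(E,\mu)}=\sum_{\{x,y\}}\mu_{xy}\big||f(x)|^p-|f(y)|^p\big|$. The elementary bound $|a^p-b^p|\le p|a-b|\max(a,b)^{p-1}\le p|a-b|(a^{p-1}+b^{p-1})$ for $a,b\ge 0$, together with $\big||f(x)|-|f(y)|\big|\le|f(x)-f(y)|$, yields
$$\||Dg|\|_{\ell^1(E,\mu)}\le p\sum_{\{x,y\}}\mu_{xy}|f(x)-f(y)|\,(|f(x)|^{p-1}+|f(y)|^{p-1}).$$
Applying H\"older on $(E,\mu)$ with exponents $p$ and $p'=p/(p-1)$ splits the right-hand side as $\||Df|\|_{\ell^p(E,\mu)}$ times $\big(\sum_{\{x,y\}}\mu_{xy}(|f(x)|^{p-1}+|f(y)|^{p-1})^{p'}\big)^{1/p'}$. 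Because $(p-1)p'=p$, the second factor is controlled by $\sum_{\{x,y\}}\mu_{xy}(|f(x)|^p+|f(y)|^p)=\sum_x|f(x)|^p\deg_x\le M\|f\|_{\ell^p(V,\nu)}^p$ via \eqref{e:bounded Laplacian}, so it is bounded by $C\|f\|_{\ell^p(V,\nu)}^{p-1}$. Combining the three steps gives $\|f\|_{\ell^p(V,\nu)}^p\le C\||Df|\|_{\ell^p(E,\mu)}\,\|f\|_{\ell^p(V,\nu)}^{p-1}$, and dividing by $\|f\|_{\ell^p(V,\nu)}^{p-1}$ (for $f\not\equiv 0$) finishes the proof.

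The individual steps are elementary once the base case is in hand; the only genuine input is the equivalence of the spectral gap with the $\ell^1$ isoperimetric/Sobolev inequality, which is exactly where \eqref{e:bounded Laplacian} is needed to run Cheeger's two-sided estimate. The step I expect to require the most care is the H\"older bookkeeping: verifying $(p-1)p'=p$ so that the "error" factor reassembles into $\|f\|_{\ell^p(V,\nu)}^{p-1}$ and cancels, and making sure that the passage from an edge sum of $|f|^p$ to $\|f\|_{\ell^p(V,\nu)}^p$ through \eqref{e:bounded Laplacian} is uniform in $f$.
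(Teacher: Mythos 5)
Your proposal is correct and follows essentially the same route as the paper's own proof: reduce to the reverse inequality via Lemma~\ref{l:bounded operator}, obtain the $\ell^1$ Sobolev inequality \eqref{e:Sobolev} from the spectral gap (the paper does this by testing \eqref{e:l2 spectral gap} on characteristic functions to get $h>0$, you by citing the stated Cheeger equivalence), and then bootstrap by applying \eqref{e:Sobolev} to $|f|^p$, using the mean value inequality, H\"older with exponents $p,p'$, and \eqref{e:bounded Laplacian} to absorb the factor $\|f\|_{\ell^p(V,\nu)}^{p-1}$. The only cosmetic difference is that you make the handling of absolute values and the final division by $\|f\|_{\ell^p(V,\nu)}^{p-1}$ explicit.
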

\begin{proof}
By Lemma \ref{l:bounded operator}, it suffices to prove that for any $p\in [1,\infty)$
$$
\|f\|_{\ell^p(V,\nu)}\leq C_p\||Df|\|_{\ell^p(E,\mu)},\ \ \ \forall f\in {\mathcal C}_0(V).
$$
For any finite subset $\Omega\subset V,$ we denote by $\mathds{1}_{\Omega}$ the characteristic function of $\Omega,$ i.e.
\begin{equation*}
\mathds{1}_{\Omega}(x)
=\left\{\begin{array}{ll}1, & x\in \Omega, \\ 0, & x\not\in \Omega.\end{array}\right.
\end{equation*}
By plugging $f=\mathds{1}_{\Omega}$ into \eqref{e:l2 spectral gap}, we get that the Cheeger constant is positive, i.e. $h>0.$ Hence the Sobolev inequality \eqref{e:Sobolev} holds. For any $g\in {\mathcal C}_0(V),$ plugging $f(x)=g^p(x)$ into \eqref{e:Sobolev}, we get
\begin{eqnarray*}
\|g\|_{\ell^p(V,\nu)}^p&\leq&C\sum_{x,y}|g^p(x)-g^p(y)|\mu_{xy}
\\  &\leq &
C\sum_{x,y}(|g(x)|^{p-1}+|g(y)|^{p-1})|g(y)-g(x)|\mu_{xy}
\\  &\leq &
C \left(\sum_{x,y}|g(x)-g(y)|^p\mu_{xy}\right)^{\frac1p} \left(\sum_{x,y}(|g(x)|^{p-1}+|g(y)|^{p-1})^{p'}\mu_{xy}\right)^{\frac1{p'}}
\\ &\leq &C \left(\sum_{x,y}|g(x)-g(y)|^p\mu_{xy}\right)^{\frac1p} \left(\sum_{x,y}(|g(x)|^{p}+|g(y)|^{p})\mu_{xy}\right)^{\frac1{p'}},
\end{eqnarray*}
where we used the mean value inequality in the second inequality and H\"older inequality in the third one in which $p'$ is the H\"older conjugate of $p,$ i.e. $1/p+1/{p'}=1.$ By  symmetry and \eqref{e:bounded Laplacian},
$$
\sum_{x,y}(|g(x)|^{p}+|g(y)|^{p})\mu_{xy}=2\sum_{x,y}|g(x)|^p\mu_{xy} \leq C\|g\|_{\ell^p(V,\nu)}^p.
$$
Hence
$$
\|g\|_{\ell^p(V,\nu)}^p\leq C\|D g\|_{\ell^p(E,\mu)}\|g\|_{\ell^p(V,\nu)}^{\frac{p}{p'}}.
$$
This yields the result.
\end{proof}

The following lemma was first proved by \cite[Theorem~1.3]{CoulhonDuong99}, see also \cite[Lemma~2.2]{JiKunstmannWeber10}. The proof follows verbatim from their arguments, so we omit the details.
\begin{lemma}\label{thm:bounded inverse}
    Let $G=(V,E,\nu,\mu)$ be a graph satisfying \eqref{e:bounded Laplacian}. Suppose that $G$ has a positive spectral gap, then for any $p\in (1,\infty),$
    \begin{equation}\label{e:bounded half Laplacian}
      \|\Delta^{-1/2}f\|_{\ell^p(V,\nu)}\leq C_p\|f\|_{\ell^p(V,\nu)}, \ \ \ \forall f\in {\mathcal C}_0(V).
    \end{equation}
\end{lemma}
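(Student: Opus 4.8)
The plan is to represent the negative power $\Delta^{-1/2}$ through the heat semigroup and then exploit the exponential decay granted by the spectral gap. Since $G$ has a positive spectral gap $\lambda_1=\inf\sigma(\Delta)>0$, the operator $\Delta$ is invertible on $\ell^2(V,\nu)$ and $\Delta^{-1/2}$ is well defined by the spectral theorem. The starting point is the subordination identity
$$
\lambda^{-1/2}=\frac{1}{\Gamma(1/2)}\int_0^\infty t^{-1/2}e^{-t\lambda}\,dt,\qquad \lambda\geq\lambda_1>0,
$$
which, applied through the functional calculus of $\Delta$, yields
$$
\Delta^{-1/2}f=\frac{1}{\sqrt\pi}\int_0^\infty t^{-1/2}e^{-t\Delta}f\,dt,\qquad f\in {\mathcal C}_0(V).
$$

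First I would bound the $\ell^p$ norm by Minkowski's integral inequality,
$$
\norm{\Delta^{-1/2}f}_{\ell^p(V,\nu)}\leq\frac{1}{\sqrt\pi}\int_0^\infty t^{-1/2}\norm{e^{-t\Delta}f}_{\ell^p(V,\nu)}\,dt,
$$
and then invoke Lemma \ref{lem:operator norm}: for $p\in(1,2]$ it gives $\norm{e^{-t\Delta}}_{p\to p}\leq e^{-2\lambda_1(p-1)t}$, so the integrand decays exponentially at a positive rate $\delta_p>0$, while for $p\in(2,\infty)$ the same exponential decay at a positive rate follows by duality, since $e^{-t\Delta}$ has a $\nu$-symmetric kernel and hence $\norm{e^{-t\Delta}}_{p\to p}=\norm{e^{-t\Delta}}_{p'\to p'}$. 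Plugging this in gives
$$
\norm{\Delta^{-1/2}f}_{\ell^p(V,\nu)}\leq\frac{1}{\sqrt\pi}\br{\int_0^\infty t^{-1/2}e^{-\delta_p t}\,dt}\norm{f}_{\ell^p(V,\nu)}=\delta_p^{-1/2}\norm{f}_{\ell^p(V,\nu)},
$$
which is precisely \eqref{e:bounded half Laplacian}.

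The estimate itself is routine; the point requiring care is the justification of the subordination representation as an $\ell^p$-convergent (Bochner) integral and its identification with the spectrally defined $\Delta^{-1/2}$. Since ${\mathcal C}_0(V)\subset\ell^2(V,\nu)\cap\ell^p(V,\nu)$ and the integrand is dominated by $t^{-1/2}e^{-\delta_p t}\norm{f}_{\ell^p(V,\nu)}$, which is integrable at both $0$ and $\infty$, the integral converges absolutely in $\ell^p(V,\nu)$; running the same argument in $\ell^2(V,\nu)$ shows its value there agrees with $\Delta^{-1/2}f$ from the functional calculus, so the $\ell^p$ integral furnishes the desired bounded extension. I expect this convergence and identification bookkeeping, rather than the quantitative estimate, to be the main (if minor) obstacle, which is presumably why the authors defer to the verbatim arguments of Coulhon--Duong and Ji--Kunstmann--Weber.
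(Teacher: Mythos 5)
Your proof is correct and is essentially the same argument the paper relies on: the paper omits the proof and defers to \cite{CoulhonDuong99} and \cite{JiKunstmannWeber10}, whose reasoning is exactly your subordination formula $\Delta^{-1/2}=\pi^{-1/2}\int_0^\infty t^{-1/2}e^{-t\Delta}\,dt$ combined with exponential decay of $\|e^{-t\Delta}\|_{p\to p}$ coming from the spectral gap via interpolation. Your extra care for $p>2$ (duality/self-adjointness, since the interpolation proof of Lemma \ref{lem:operator norm} only covers $p\in(1,2]$) and the identification of the $\ell^p$-convergent integral with the spectrally defined $\Delta^{-1/2}$ are exactly the right bookkeeping.
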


Now we can prove our main theorem in this section.
%\begin{thm}\label{thm:normalized bounded2}
%  Let $G=(V,E,\nu,\mu)$ be a graph satisfying \eqref{e:bounded Laplacian}. Suppose that $G$ has a positive spectral gap, then for any $p\in (1,\infty),$
%  $$\|\Delta^{1/2}f\|_p\approx \|f\|_p,\ \ \forall f\in {\mathcal C}_0(V).$$
%\end{thm}

\begin{proof}[\bf Proof of Theorem~\ref{thm:main theorem}]
By Theorem \ref{thm:one part} and Lemma \ref{thm:bounded inverse}, it suffices to prove that
\begin{equation*}
\|\Delta^{1/2}f\|_{\ell^p(V,\nu)}\leq C\|f\|_{\ell^p(V,\nu)}, \ \ \ \forall f\in {\mathcal C}_0(V).
\end{equation*}
For $f\in {\mathcal C}_0(V),$ let $g=\Delta^{-1/2}f.$ Then by Lemma \ref{thm:bounded inverse}, $g\in
\ell^p(V,\nu)$ for any $p\in (1,\infty).$ Hence by  functional calculus and noting that $\Delta$ is bounded  on
$\ell^p$, $\Delta^{1/2}f=\Delta (\Delta^{-1/2}f).$
Hence by Lemma \ref{l:bounded operator} and Lemma \ref{thm:bounded inverse},
$$
\|\Delta^{1/2}f\|_{\ell^p(V,\nu)}=\|\Delta(\Delta^{-1/2}f)\|_{\ell^p(V,\nu)}\\
\leq  C\|\Delta^{-1/2}f\|_{\ell^p(V,\nu)}\leq C\|f\|_{\ell^p(V,\nu)}.
$$
This proves the theorem.
\end{proof}

Now we can prove a generalization of \cite[Theorem~1.1]{JiKunstmannWeber10} for graphs.
%This result has been generalized to manifolds with positive essential spectral gap by \cite[Theorem~1.1]{JiKunstmannWeber10}.
%If a manifold $M$ has a positive essential spectral gap, then its spectrum is contained in $\{0\}\cup[a,\infty)\subset \R$ for some $a>0.$ In this case, if, then it has to be an eigenvalue of finite multiplicity and the manifold is of finite total volume. There are many interesting examples of finite volume whose spectra satisfy this condition, see \cite{JiKunstmannWeber10}. The same result holds for graphs.}
\
\begin{proof}[\bf Proof of Theorem \ref{thm:generalization}]
By  assumption, $\sigma(\Delta)\subset\{0\}\cup[a,\infty)$ for some $a>0.$ This implies that the total $\nu$-measure of the graph $\nu(V)$ is finite. Indeed, suppose $g$ is an $\ell^2(V,\nu)$ eigenfunction pertaining to the eigenvalue $0,$ that is, $g$ is a harmonic function. Then the $\ell^2$ Liouville theorem (see e.g. \cite{HuaJost13} or \cite{HuaKeller14}) yields that $g$ is a constant function. The fact  that $g\in \ell^2(V,\nu)$ yields that  $\nu(V)$ is finite.

  We define for any $p\in[1,\infty),$ $$\ell_0^p(V,\nu):=\left\{f\in \ell^p(V,\nu)| \sum_{x\in V}f(x)\nu_x=0\right\}.$$
  By the fact that $\Delta$ is a bounded operator, $\ell^p_0(V,\nu)$ is an invariant subspace for the semigroup $T_p(t):=e^{-t\Delta}:\ell^p(V,\nu)\to \ell^p(V,\nu)$ and $\ell^p(V,\nu)=\ell^p_0(V,\nu)\oplus\mathrm{span}\{1\}.$ Furthermore, $\Delta:\ell^p_0(V,\nu)\to \ell^p_0(V,\nu).$ In fact, for any $f\in \ell^p_0(V,\nu),$ the H\"older inequality implies that $f\in \ell^1(V,\nu).$ By Lemma \ref{l:bounded operator} and Lemma \ref{lem:laplace zero}, $\Delta f\in \ell^p_0(V,\nu).$ Setting $\Delta_0:=\Delta|_{\ell^p_0(V,\nu)},$ we have the decomposition of the Laplacian according to $\ell^p(V,\nu)=\ell^p_0(V,\nu)\oplus\mathrm{span}\{1\}$, $$\Delta=\Delta_0\oplus 0,$$ where $0$ is the null operator.

Since the graph is connected, the $\ell^2$ Liouville theorem for harmonic functions in \cite{HuaJost13, HuaKeller14} implies $\{0\}$ is an eigenvalue of multiplicity one, which yields
  that $\inf(\sigma(\Delta_0))>0.$ Applying the same argument in the proof of Theorem \ref{thm:main theorem} to the operator $\Delta_0$ on $\ell^p_0(V,\nu),$ we prove the theorem.
\end{proof}

%%%%%%%%%%%%%%%%%%%%%%%%%%%%%%%%%%%%%%%%%%%%%%%%%%%%%%%%%%%%%%

%Morgan, F.: Manifolds with density. Notices Amer. Math. Soc., 52, no. 8, 853?858, (2005).

%\bibliography{Rieszpositive}
\bibliographystyle{alpha}
\newcommand{\etalchar}[1]{$^{#1}$}

\end{document}